\documentclass{amsart}
\usepackage{amsfonts,amsmath,amsthm,amssymb,}
\usepackage[latin1]{inputenc}
\usepackage{graphicx}
\usepackage{url}

\usepackage{fancyhdr}

\usepackage{enumerate}
\usepackage[breaklinks]{hyperref}
\usepackage{verbatim}
\usepackage[mathscr]{euscript}
\usepackage{enumerate,multicol,array}

\title[The model theory of modules of a $C^*$-algebra]{The model theory of modules of a $C^*$-algebra}

\author{Camilo Argoty}
\address{Camilo Argoty
\\ Departamento de Matem\'aticas
\\ Universidad Sergio Arboleda
	\\ Bogot\'a, Colombia}

\date{}

\thanks{The author is very thankful to Alexander Berenstein for his help in reading and correcting this work. In the same way, the author wants to thank Ita\"i Ben Yaacov for discussing ideas in particular cases. Seemingly, the author wants to thank C. Ward Henson for his advice and help, and for sharing with the autor the ideas underlying Section \ref{Htheory}. Finally, the author wants to thank Andr\'es Villaveces for his interest and for sharing ideas on how to extend the results of this paper.}


\makeatletter
\def\newrefformat#1#2{%
  \@namedef{pr@#1}##1{#2}}
\def\prettyref#1{\@prettyref#1:}
\def\@prettyref#1:#2:{%
  \expandafter\ifx\csname pr@#1\endcsname\relax%
    \PackageWarning{prettyref}{Reference format #1\space undefined}%
    \ref{#1:#2}%
  \else%
    \csname pr@#1\endcsname{#1:#2}%
  \fi%
}
\makeatother

\newrefformat{eq}{\textup{(\ref{#1})}}
\newrefformat{cha}{Chapter \ref{#1}}
\newrefformat{sec}{Section \ref{#1}}
\newrefformat{tab}{Table \ref{#1} on page \pageref{#1}}
\newrefformat{fig}{Figure \ref{#1} on page \pageref{#1}}


\makeatletter
\def\indsym#1#2{%
  \setbox0=\hbox{$\m@th#1x$}%
  \kern\wd0%
  \hbox to 0pt{\hss$\m@th#1\mid$\hbox to 0pt{$\m@th#1^{#2}$}\hss}%
  \lower.9\ht0\hbox to 0pt{\hss$\m@th#1\smile$\hss}%
  \kern\wd0} 
\def\nindsym#1#2{%
  \setbox0=\hbox{$\m@th#1x$}%
  \kern\wd0%
  \hbox to 0pt{\mathchardef\nn="3236\hss$\m@th#1\nn$\kern1.4\wd0\hss}
  \hbox to 0pt{\hss$\m@th#1\mid$\hbox to 0pt{$\m@th#1^{#2}$}\hss}%
  \lower.9\ht0\hbox to 0pt{\hss$\m@th#1\smile$\hss}%
  \kern\wd0}

\makeatother


 \def\ba{\bar a}  
 \def\bv{\bar v}

\def\p{\pi}\def\f{\phi}\def\a{\alpha}

\def\ben{\begin{enumerate}}\def\een{\end{enumerate}}
\def\bdc{\begin{description}}\def\edc{\end{description}}
\def\bitm{\begin{itemize}}\def\eitm{\end{itemize}}
\def\bdf{\begin{defin}}\def\edf{\end{defin}}
\def\bth{\begin{theo}}\def\eth{\end{theo}}
\def\bfc{\begin{fact}}\def\efc{\end{fact}}
\def\bco{\begin{coro}}\def\eco{\end{coro}}
\def\brm{\begin{rem}}\def\erm{\end{rem}}
\def\blm{\begin{lemma}}\def\elm{\end{lemma}}
\def\bnt{\begin{nota}}\def\ent{\end{nota}}
\def\bex{\begin{exe}}\def\eex{\end{exe}}
\def\bpf{\begin{proof}}\def\epf{\end{proof}}
\def\bas{\begin{assum}}\def\eas{\end{assum}}
\def\beq{\begin{equation}}\def\eeq{\end{equation}}
\def\becl{\begin{cla}}\def\ecl{\end{cla}}

  \def\d{\delta}

 \def\r{\rho}
 \def\k{\kappa}  \def\a{\alpha}
 \def\L{L}   \def\l{\lambda}  \def\L{\Lambda}
   \def\f{\phi}

\def\A{\mathcal{A}}
\def\FF{\mathcal{F}}

\def\O+{\oplus}

\def\dotminus{-^{\! \! \! \!  \cdot}}

\newtheorem{theo}{Theorem}[section]

\newtheorem{coro}[theo]{Corollary}
\newtheorem{lemma}[theo]{Lemma}

\theoremstyle{definition}
\newtheorem{defin}[theo]{Definition}
\newtheorem{fact}[theo]{Fact}
\theoremstyle{remark}
\newtheorem{exe}[theo]{Example}
\newtheorem{rem}[theo]{Remark}
\newtheorem{assum}[theo]{Assumption}
\newtheorem{nota}[theo]{Notation}
\newtheorem*{cla}{Claim}

\newcommand{\N}{\ensuremath{\mathbb{N}}}
\newcommand{\Z}{\ensuremath{\mathbb{Z}}}

\newcommand{\R}{\ensuremath{\mathbb{R}}}
\newcommand{\C}{\ensuremath{\mathbb{C}}}

\newcommand{\MM}{\ensuremath{\mathcal{M}}}
\newcommand{\Aut}{\ensuremath{\text{Aut}}}

\newcommand{\sfrac}[2]{\hbox{$\frac{#1}{#2}$}}
\newcommand{\half}[1][1]{\sfrac{#1}{2}}

\begin{document}

\begin{abstract}
We study the theory of a Hilbert space $H$ as a module for a unital $C^*$-algebra $\A$ from the point of view of continuous logic. We give an explicit axiomatization for this theory and describe the structure of all the representations which are elementary equivalent to it.  We show that for every $v\in H$ the type $tp(v/\emptyset)$ is in correspondence with the positive linear functional over $\A$ defined by $v$ and has quantifier elimination as well. Finally, we characterize the model companion of the incomplete theory of all non-degenerate representations of $\A$.
\end{abstract}

\maketitle

\section{introduction}

Let $\A$ be a (unital) algebra and let $\p:\A\to B(H)$ be a $C^*$-algebra nondegenerate isometric homomorphism, where $B(H)$ is the algebra of bounded operators over a Hilbert space $H$. The goal of this paper is to study $H$ as a metric structure expanded by $\A$ from the point of view of continuous logic (see \cite{BBHU} and \cite{Ben3}). In order to describe the structure of $H$ as a module for $\A$, we include a symbol $\dot{a}$ in the language of the Hilbert space structure whose interpretation in $H$ will be $\p(a)$ for every $a$ in the unit ball of $\A$. Following \cite{Ben3}, we study the theory of $H$ as a metric structure of only one sort:
\[(Ball_1(H),0,-,i,\half[x+y],\|\cdot\|,(\p(a))_{a\in Ball_1(\A)})\] where $Ball_1(H)$  and $Ball_1(\A)$ are the corresponding unit balls in $H$ and $\A$ respectively; $0$ is the zero vector in $H$; $-: Ball_1(H)\to Ball_1(H)$ is the function that to any vector $v\in Ball_1(H)$ assigns the vector $-v$; $i: Ball_1(H)\to Ball_1(H)$ is the function that to any vector $v\in Ball_1(H)$ assigns the vector $iv$ where $i^2=-1$; $\half[x+y]: Ball_1(H)\times Ball_1(H)\to Ball_1(H)$ is the  function that to a couple of vectors $v$, $w\in Ball_1(H)$ assigns the vector $\half[v+w]$; $\|\cdot\|: Ball_1(H) \to [0, 1]$ is the norm function; $\A$ is an unital $C^*$-algebra; $\p:\A\to B(H)$ is a $C^*$-algebra isometric homomorphism. The metric is given by $d(v,w) =\|\half[v-w]\|$. Briefly, the structure will be refered to as $(H,\p)$.

It is worthy noting that with this language, we can define the inner product taking into account that for every $v$, $w\in Ball_1(H)$, \[\langle v\ |\ w\rangle=\|\half[v+w]\|^2-\|\half[v-w]\|^2+i(\|\sfrac{v+iw}{2}\|^2-\|\half[v-iw]\|^2)\] Because of this reason, we will make free use of the inner product as if it were included in the language. In most arguments, we will forget this formal point of view, and will treat $H$ directly. To know more about the continuous logic point of view of Banach spaces please see \cite{Ben3}, Section $2$.

The model theory of expansions of Hilbert spaces by bounded operators has been studied extensively. Henson and Iovino in \cite{Io}, observed that the theory of a Hilbert space expanded with a family of bounded operators is stable. The author and Berenstein (\cite{ArBer}) studied the theory of the structure $(H,+,0,\langle|\rangle,U)$ where $U$ is a unitary operator with countable spectrum and characterized prime models and orthogonality of types. The author and Ben Yaacov (\cite{ArBen}) studied the more general case of a Hilbert space expanded by a normal operator $N$. Most results in this paper are generalizations of results present in \cite{ArBer,ArBen}. In \cite{BUZ} Ben Yaacov, Usvyatsov and Zadka characterized the unitary operators corresponding to generic automorphisms of a Hilbert space as those unitary transformations whose spectrum is $S^1$ and gave the key ideas used in this paper to characterize domination and orthogonality of types.

A work related to this one is the one of Farah, Hart and Sherman who recently have showed that the theory of a $C^*$-algebra is not stable (See \cite{FaHaShe1} and \cite{FaHaShe2}). These papers and Farah's work point out one phenomenon: $C^*$-algebras have complicated model theoretical structure but their representations are very well behaved. This is similar to the case of the integers $\Z$: The theory $Th(\Z)$ is quite difficult from the model theoretic point of view, but some of its representations like torsion free abelian groups are very well behaved.

This paper is divided as follows: In Section \ref{sectiontheoryofHpi}, we give an explicit axiomatization of $Th(H,\p)$; many of the results from this section were proved by C.W. Henson but did not appear in print. In Section \ref{SectionModelsOfTpi} we give a description of the models of $Th(H,\p)$ and build the monster model for the theory. In Section \ref{sectiontypesquantifierelimination}, we characterize the types over the emtpy set as positive linear functionals on $\A$ and prove quantifier elimination. Finally, in section \ref{SectionModelCompanionForTA}, we build a model companion for the incomplete theory of all non-degenerate representations of a $C^*$-algebra $\A$.

We will assume the reader is familiar with basic concepts of spectral theory, for example the material found in \cite{Pe}. We will recall technical results from \cite{Pe} as we need them.

\section{the theory $IHS_{\A,\p}$}\label{sectiontheoryofHpi}

In this section we provide an explicit axiomatization of $Th(H,\p)$. The main tool here is Theorem \ref{Voiculescu} which is mainly a consecuence of Voiculescu's theorem (see \cite{Da}). This Theorem states that two separable representations $(H_1,\p_1)$ and $(H_2,\p_2)$ of a separable $C^*$-algebra $\A$ are approximately unitarily equivalent if and only if for every $a\in\A$, rank$(\p_1(a))=$rank$(\p_2(a))$. This last statement can be expressed in continuous first order logic and is the first step to build the axiomatization we mentioned above. Lemma \ref{HensonsLemma}, Theorem \ref{equiunel} and Corollary \ref{Htheory} are remarks and unpublished results from C. Ward Henson.

\begin{defin}
Let $\A$ be a $C^*$-algebra. A \textit{representation} is an isometric algebra homomorphism $\p:\A\to B(H)$ such that for all $a\in\A$, $\p(a^*)=(\p(a))^*$. In this case $H$ is called an $\A$-\textit{module}. A Hilbert subspace $H^\prime\subseteq H$ is called an $\A$-\textit{submodule} or a \textit{reducing $\A$-subspace} of $H$ if $H^\prime$ is closed under $\p$. $H$ is called $\A$-\textit{irreducible} or $\A$-\textit{minimal} if $H$ has no proper non trivial $\A$-submodules. The set of representations of an algebra $\A$ on $B(H)$ is denoted $rep(\A,B(H))$.
\end{defin}

\bdf
Let $(H,\p)$ be a representation of a $C^*$-algebra $\A$. $(H,\p)$ is called \textit{non-degenerate} if for every nonzero vector $v\in H$, there exists $a\in\A$ such that $\p(a)v\neq 0$.
\edf

\bfc[Remark 2.2.4 in \cite{Pe}]\label{pinondegenerateifandonlyifH=eH}
A representation $(H,\p)$ of an unital $C^*$-algebra $\A$ is non-degenerate if and only if $\p(e)=I$, where $e$ is the identity of $\A$ and $I$ is the identity of $B(H)$.
\efc

\begin{defin}
Let $IHS_\A$ be the theory of Hilbert spaces together with the following conditions:
    \begin{enumerate}
    \item For $v\in Ball_1(H)$ and $a$, $b\in Ball_1(\A)$:
\[\dot{(ab)}v=(\dot{a} \dot{b})v = \dot{a}(\dot{b} v)\]
    \item For $v\in Ball_1(H)$ and $a$, $b\in Ball_1(\A)$:
\[\dot{(\half[a+b])}(v)=\half[\dot{a}+\dot{b}](v) = \half[\dot{a} v+\dot{b} v]\]
	\item\label{ItemaactingOnAverageVectors} For $v$, $w\in Ball_1(H)$, and $a\in Ball_1(\A)$:
\[\dot{a}\bigl(\half[v+w]\bigr)=\half[\dot{a} v+\dot{a} w]\]
    \item For $v\in Ball_1(H)$ and $a\in Ball_1(\A)$:
\[\langle \dot{a} v\ |\ w \rangle = \langle v\ |\dot{a^*}w \rangle\]
    \item\label{ItemNormOfa} 
		\ben
			\item For $a\in Ball_1(\A)$:
					\[\sup_v(\|\dot{a} v\|\dotminus \|a\|\|v\|)=0\]
			\item For $a\in Ball_1(\A)$:
					\[\inf_{v}\max(|\|v\|-1|,|\|\dot{a} v\|-\|a\|\ |)=0\]
		\een
    \item\label{ItemNonDegenerate} For $v\in Ball_1(H)$ and $e$ the identity element in $\A$:
\[ (\dot{ie})v=iv  \]
\[\dot{e}v=v\]
\end{enumerate}
\end{defin}

\begin{rem}
By Fact \ref{pinondegenerateifandonlyifH=eH}, Item (\ref{ItemNonDegenerate}) implies that the representation is non-degenerate. Therefore, $IHS_\A$ is the theory of the non-degenerate representations of a fixed $C^*$-algebra $\A$. Conditions in Item (\ref{ItemNormOfa}), are natural continuous logic conditions that say that $\|\p(a)\|=\|a\|$.
\end{rem}

\brm
Since the rationals of the form $\frac{k}{2^n}$ are dense in $\R$, Item (\ref{ItemaactingOnAverageVectors}) and Item (\ref{ItemNonDegenerate}) are enough to show that for all $v\in Ball_1(H)$ and all $a\in Ball_1(\A)$, we have that $(\dot{\l a})v=\l(\dot{a}v)$.
\erm

\blm\label{LemmaNonCompactOperator}
If $S:H\to H$ is a bounded operator, $S$ is non-compact if and only if for some $\l_S>0$, $S(Ball_1(H))$ contains an isometric copy of the ball of radius $\l_S$ of $\ell^2$ i.e., there exists an orthonormal sequence $(w_i)_{i\in\N}\subseteq S(Ball_1(H))$ and a vector sequence $(u_i)_{i\in\N}\subseteq Ball_1(H)$ such that for every $i\in\N$, $Su_i=\l_S w_i$.
\elm
\bpf
Suppose $S$ is non-compact. Then there is a sequence $(u^\prime_i)_{i\in\N}\subseteq Ball_1(H)$ such that no subsequence of $(Su^\prime_i)_{i\in\N}$ is convergent. By the Grahm-Schmidt process we can  assume that $(Su^\prime_i)_{i\in\N}$ is an orthogonal sequence. Since no subsequence of $(Su^\prime_i)_{i\in\N}$ converges, we have that $\liminf\{\|Su^\prime_i\|\ |\ i\in\N\}>0$ (otherwise there would be a subsequence of $Su^\prime_i$ converging to $0$). Let $\l_S:=\frac{\liminf\{\|Su^\prime_i\|\ |\ i\in\N\}}{2}>0$. For $i\in\N$, let $u_i:=\frac{\l_S u_i^\prime}{\|Su_i^\prime\|}$ and $w_i:=\frac{Su_i^\prime}{\|Su_i^\prime\|}$. Without loss of generality, we can asume that for all $i\in\N$, $\|Su_i^\prime\|>\l_S$ and therefore $\|u_i\|\leq 1$. Then, $Su_i=S(\frac{\l_Su_i}{\|Su_i\|})=\l_S\frac{Su_i^\prime}{\|Su_i^\prime\|}=\l_S w_i$.

On the other hand, suppose there are $\l_S>0$, an orthonormal sequence $(w_i)_{i\in\N}\subseteq S(Ball_1(H))$ and a vector sequence $(u_i)_{i\in\N}\subseteq Ball_1(H)$ such that for every $i\in\N$, $Su_i=\l_S w_i$. Then no subsequence of $(Su_i)_{i\in\N}$ converges and $S$ is non-compact.
\epf

\brm
If in Lemma \ref{LemmaNonCompactOperator} $\|S\|\leq 1$, it is clear that $\l_S\leq 1$.
\erm

\begin{lemma}\label{SinAe}
Let $a\in Ball_1(\A)$ be such that $\p(a)$ is a non-compact operator on $H$. Let $\l_{\p(a)}$, $(u_i)_{i\in\N}$ and $(w_i)_{i\in\N}$ be as described in Lemma \ref{LemmaNonCompactOperator}. Then, for every $n\in\N$
\begin{multline}\label{EssentialEquation}
(H,\p)\models\inf_{u_1, u_2 \cdots u_n} \inf_{w_1, w_2 \cdots w_n}\max_{i,j=1,\cdots,n}(|\langle w_i\ |\ w_j\rangle-\d_{ij}|,|au_i-\l_{\p(a)}w_i| \bigr)=0
\end{multline}
\end{lemma}
\begin{proof}
This condition is a continuous logic condition for:
\begin{multline}
\exists u_1 u_2 \cdots u_n \exists w_1 w_2 \cdots w_n\wedge\bigl(\bigwedge_{i,j=1,\cdots,n}\langle w_i\ |\ w_j\rangle=\d_{ij}\bigr)\wedge\\
\wedge\bigl(\bigwedge_{i=1,\cdots,n}\dot{a}u_i=\l_{\p(a)}w_i\bigr)
\end{multline}
where $\d_{ij}$ is Kronecker's delta. By Lemma \ref{LemmaNonCompactOperator}, this set of conditions says that $\p(a)(Ball_1(H))$ contains an isometric copy the ball of radius $\l_{\p(a)}$ of $\ell^2$.
\end{proof}

\brm\label{RemarkOperatorRankN}
It is an easy consecuence of Riesz representation theorem that if $S:H\to H$ is an operator with rank $n$, then there exist two orthonormal families $E_1:=\{u_1,\cdots,v_n\}$, $E_2:=\{w_1,\cdots,w_n\}$ and a family $\{\a_i,\dots,\a_n\}$ of non-zero complex numbers such that for every $v\in H$, $Sv=\sum_{i=1}^n\a_i\langle v\ |\ u_i\rangle w_i$. Furthermore, if $R$ is a compact operator, there is a complex sequence $(\a_i)_{i\in \N^+}$ that converges to $0$ such that for every $v\in H$, $Rv=\sum_{i=1}^\infty\a_i\langle v\ |\ u_i\rangle w_i$. If $\|R\|\leq 1$, then for every $i$, $|\a_i|\leq 1$.
\erm

\begin{lemma}\label{SinAd}
Let $n\in\N$ and $a\in Ball_1(\A)$ be such that rank$(\p(a))= n$. Let $\{\a_i,\dots,\a_n\}$ complex numbers as described in \ref{RemarkOperatorRankN}. Then
\begin{multline}\label{DiscreteEquation}
(H,\p)\models\inf_{u_1 u_2 \cdots u_n}\inf_{w_1 w_2 \cdots w_n} \sup_v \max_{i,j=1\cdots n}\bigl(|\langle u_i\ |\ u_j\rangle-\d_{ij}|,|\langle w_i\ |\ w_j\rangle-\d_{ij}|,\\
,\|\dot{a} v-\sum_{k=1}^n\a_i\langle v\ |\ u_i\rangle w_i)\|\bigr)=0
\end{multline}
\end{lemma}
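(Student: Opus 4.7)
The plan is to observe that this lemma is essentially a direct translation of Remark \ref{RemarkOperatorRankN} into the infimum/supremum formalism of continuous logic, so the infimum is in fact attained (and equal to $0$) by explicit witnesses. There is no serious obstacle here; the only thing to verify is that the witnesses produced by the remark live in the unit ball $Ball_1(H)$ over which the quantifiers range.

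I would proceed as follows. Since $\p(a)$ has rank exactly $n$, Remark \ref{RemarkOperatorRankN} applied to $S=\p(a)$ supplies orthonormal families $\{u_1,\dots,u_n\}$ and $\{w_1,\dots,w_n\}$ in $H$, together with non-zero complex numbers $\a_1,\dots,\a_n$, such that for every $v\in H$,
\[
\p(a)v \;=\; \sum_{i=1}^n \a_i\,\langle v\mid u_i\rangle\,w_i.
\]
Since $a\in Ball_1(\A)$ and $\p$ is isometric, $\|\p(a)\|\le 1$, and the last sentence of Remark \ref{RemarkOperatorRankN} ensures $|\a_i|\le 1$, so these constants may legally appear in the continuous-logic expression. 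The vectors $u_i$ and $w_i$ have norm $1$ and therefore lie in $Ball_1(H)$, so they are admissible witnesses for the outer infima.

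Substituting these witnesses into the formula, each component of the $\max$ becomes identically zero on $Ball_1(H)$: orthonormality forces $|\langle u_i\mid u_j\rangle-\d_{ij}|=0$ and $|\langle w_i\mid w_j\rangle-\d_{ij}|=0$ for all $i,j$, while the displayed identity above gives
\[
\Bigl\|\dot a\,v-\sum_{i=1}^n\a_i\,\langle v\mid u_i\rangle\,w_i\Bigr\|=0
\]
for every $v\in Ball_1(H)$. Hence the supremum over $v$ of the $\max$ equals $0$, whence the two outer infima are also $0$, which is precisely the condition (\ref{DiscreteEquation}). The argument is essentially bookkeeping: the content is in Remark \ref{RemarkOperatorRankN}, and the present lemma just records that this finite-rank decomposition can be asserted by a continuous first-order condition which $(H,\p)$ satisfies.
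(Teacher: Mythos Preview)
Your proposal is correct and follows the same approach as the paper: both observe that condition~(\ref{DiscreteEquation}) is simply the continuous-logic rendering of the classical statement that there exist orthonormal families $\{u_i\}$, $\{w_i\}$ with $\p(a)v=\sum_i\a_i\langle v\mid u_i\rangle w_i$ for all $v$, which holds by Remark~\ref{RemarkOperatorRankN}. Your write-up is in fact a bit more careful than the paper's, since you explicitly check that the witnesses lie in $Ball_1(H)$ and that $|\a_i|\le 1$.
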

\begin{proof}
This condition is a continuous logic condition for:
\begin{multline}
\exists u_1 u_2 \cdots u_n\exists w_1 w_2 \cdots w_n \bigl(\bigwedge_{i,j=1,\cdots,n}\langle u_i\ |\ u_j\rangle=\d_{ij}\wedge\langle w_i\ |\ w_j\rangle=\d_{ij}\bigr)\wedge\\
\wedge\forall v (\dot{a} v=\sum_{k=1}^n\a_i\langle v\ |\ u_i\rangle w_i)
\end{multline}
where $\d_{ij}$ is Kronecker's delta.
\end{proof}

\brm
If Condition \ref{DiscreteEquation} is valid for some $a\in\A$, and some tuple $\{\a_1,\dots,\a_n\}$, by Remark \ref{RemarkOperatorRankN} it is clear that $\p(a)$ has rank $n$.
\erm

\begin{defin}
Two representations $(H_1,\p_1)$ and $(H_2,\p_2)$ are said to be \textit{unitarily equivalent} if there exists an isometry $U$ from $H_1$ to $H_2$ such that for every $a\in\A$, $U\p_1(a)U^*=\p_2(a)$.
\edf

\begin{defin}\label{ApproximatelyUnitarilyEquivalentRepresentations}
Two representations $(H_1,\p_1)$ and $(H_2,\p_2)$ are said to be \textit{approximately unitarily equivalent} if there exists a sequence of unitary operators $(U_n)_{n<\omega}$ from $H_1$ to $H_2$ such that for every $a\in\A$ $\p_2(a)=\lim_{n\to \infty} U_n\p_1(a)U_n^*$ where the limit is taken in the norm topology.
\end{defin}

\begin{theo}[Theorem II.5.8 in \cite{Da}]\label{Voiculescu}
Two nondegenerate representations $(H_1,\p_1)$ and $(H_2,\p_2)$ of a separable $C^*$-algebra on separable Hilbert spaces are approximately unitarily equivalent if and only if, for all $a\in\A$, $\text{rank}(\p_1(a))=\text{rank}(\p_2(a))$
\end{theo}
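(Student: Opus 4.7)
The plan is to handle the two directions separately, with the forward implication being routine and the converse being the substantive content (Voiculescu's non-commutative Weyl--von~Neumann theorem).

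For necessity, suppose unitaries $U_n:H_1\to H_2$ satisfy $U_n\p_1(a)U_n^*\to\p_2(a)$ in norm for every $a\in\A$. Since $\text{rank}$ is invariant under unitary conjugation, $\text{rank}(U_n\p_1(a)U_n^*)=\text{rank}(\p_1(a))$ for each $n$. The key observation is that $\text{rank}$ is lower semicontinuous with respect to norm convergence on $B(H)$: if $T_n\to T$ then $\text{rank}(T)\le\liminf_n\text{rank}(T_n)$, since any $k$ linearly independent vectors in the range of $T$ can be lifted to the ranges of the $T_n$ for large $n$. Applied to our sequence this yields $\text{rank}(\p_2(a))\le\text{rank}(\p_1(a))$; the symmetric argument, replacing $U_n$ by $U_n^*$ and $\p_1(a)$ by $U_n^*\p_2(a)U_n$, gives the reverse inequality.

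For sufficiency, the guiding strategy is Voiculescu's absorption theorem: if $\sigma:\A\to B(K)$ is a nondegenerate representation whose image contains no nonzero compact operator, then any nondegenerate $\p:\A\to B(H)$ is approximately unitarily equivalent to $\p\oplus\sigma$. To prove absorption, I would first construct a quasicentral approximate unit for the compact operators on $H\oplus K$ relative to $(\p\oplus\sigma)(\A)$ via Arveson's lemma, then use it to build, for any finite $F\subseteq\A$ and $\varepsilon>0$, an almost-isometry $V:H\to H\oplus K$ with $\|V\p(a)-(\p\oplus\sigma)(a)V\|<\varepsilon$ for $a\in F$, and extend $V$ to a unitary by a small perturbation. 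Iterating over a countable dense subset of $\A$ produces a sequence of unitaries that witness approximate unitary equivalence.

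To complete the sufficiency direction, use Theorem \ref{RemarkEveryRepresentationIsADirectSumOfCyclicRepresentations} to decompose $\p_1$ and $\p_2$ as direct sums of cyclic subrepresentations and then separate off the part whose image lies in the compacts. The assumption $\text{rank}(\p_1(a))=\text{rank}(\p_2(a))$ pins down the compact part up to unitary equivalence: finite-dimensional irreducible subrepresentations must occur with matching multiplicities in $\p_1$ and $\p_2$. On the complementary part, whose image contains no nonzero compact operator, Voiculescu's absorption lets one freely add copies of each side to the other until the two resulting representations are unitarily equivalent; composing with the approximate unitary equivalences coming from absorption yields the claim. The main obstacle throughout is the construction of the intertwining almost-isometry in the absorption step: it hinges on delicate estimates on the commutators $[e_\lambda^{1/2},\p(a)]$ produced by the quasicentrality of the approximate unit, and this is where the bulk of the work in Voiculescu's original argument lies.
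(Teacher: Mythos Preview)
The paper does not prove this theorem at all: it is stated with the attribution ``Theorem II.5.8 in \cite{Da}'' and used as a black box thereafter, so there is no in-paper proof to compare your proposal against. Your sketch is a reasonable outline of the standard argument (lower semicontinuity of rank for the easy direction, Voiculescu's absorption theorem via quasicentral approximate units for the hard direction), which is essentially what one finds in Davidson's book; but be aware that in the context of this paper you would simply cite the result rather than reproduce its proof.
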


Recall that all $C^*$-algebras under consideration are unital and all representations are nondegenerate. However, in the next lemma we do not use the hypothesis that $\A$ is unital.

\begin{lemma}\label{HensonsLemma}
Let $\A$ be a separable $C^*$-algebra of operators on the separable Hilbert space $H$, and $\p_1$ and $\p_2$ two non-degenerate representations of $\A$ on $H$. Then the structures $(H,\p_1)$ and $(H,\p_2)$ are elementarily equivalent if and only if $\p_1$ and $\p_2$ are approximately unitarily equivalent.
\end{lemma}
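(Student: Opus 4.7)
My plan is to combine Voiculescu's Theorem (Theorem \ref{Voiculescu}) with the continuous-logic translations of ``non-compactness'' and ``finite rank'' given by Lemmas \ref{SinAe} and \ref{SinAd}. Voiculescu tells us that $\p_1$ and $\p_2$ are approximately unitarily equivalent exactly when $\text{rank}(\p_1(a))=\text{rank}(\p_2(a))$ for every $a\in\A$, so it suffices to show that this rank condition is equivalent to elementary equivalence. Since rank is invariant under nonzero scaling, I can restrict attention to $a\in Ball_1(\A)$.

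For the direction from approximate unitary equivalence to elementary equivalence, I would fix a sentence $\s$ and unitaries $U_n$ with $U_n\p_1(a)U_n^*\to\p_2(a)$ in norm for each $a$. Each $U_n$ is a Hilbert-space unitary, so it implements an isomorphism of metric structures $(H,\p_1)\cong (H, U_n\p_1(\cdot)U_n^*)$, giving $\s^{(H,\p_1)}=\s^{(H, U_n\p_1(\cdot)U_n^*)}$ for every $n$. The sentence $\s$ involves only finitely many symbols $\dot a_1,\dots,\dot a_k$, and the value of a formula depends uniformly continuously on the interpretations of these symbols (when those interpretations are operators of norm $\leq 1$). Hence $\s^{(H, U_n\p_1(\cdot)U_n^*)}\to \s^{(H,\p_2)}$ as $n\to\infty$, yielding $\s^{(H,\p_1)}=\s^{(H,\p_2)}$.

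For the converse, suppose $(H,\p_1)\equiv (H,\p_2)$ and pick $a\in Ball_1(\A)$. If $\p_1(a)$ has finite rank $n$ with decomposition as in Remark \ref{RemarkOperatorRankN} and coefficients $\a_1,\dots,\a_n$, then by Lemma \ref{SinAd} the sentence in \ref{DiscreteEquation} has value $0$ in $(H,\p_1)$, hence also in $(H,\p_2)$, forcing $\p_2(a)$ to be of that exact form and so $\text{rank}(\p_2(a))\leq n$; the same argument with the roles of $\p_1,\p_2$ exchanged rules out any strictly smaller finite rank for $\p_2(a)$, giving equality. If $\p_1(a)$ is non-compact, Lemmas \ref{LemmaNonCompactOperator} and \ref{SinAe} provide a $\l>0$ such that the sentence \ref{EssentialEquation} has value $0$ in $(H,\p_1)$ for every $n$, hence in $(H,\p_2)$; a diagonal extraction in $n$ together with compactness in $H$ then recovers an orthonormal sequence in $\p_2(a)(Ball_1(H))$ scaled by $\l$, so by the converse half of Lemma \ref{LemmaNonCompactOperator}, $\p_2(a)$ is non-compact. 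The remaining case, $\p_1(a)$ compact of infinite rank, is settled by exclusion: by the two sub-cases above (with roles swapped), $\p_2(a)$ cannot be non-compact and cannot have finite rank, so it is also compact of infinite rank. In every case $\text{rank}(\p_1(a))=\text{rank}(\p_2(a))$, and Theorem \ref{Voiculescu} finishes the proof.

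The main delicate point is identifying the right ``rank-witnessing'' sentences in continuous logic and verifying that every possible value of $\text{rank}(\p(a))$---a finite integer, zero, or infinite---is detected by elementary equivalence: the finite-rank case requires matching specific coefficients $\a_i$ and orthonormal frames, the non-compact case requires a quantitative witness $\l>0$ rather than a purely existential one, and the compact-infinite-rank case is only accessible by elimination from the first two. Once these three cases are handled consistently, the rest is essentially an application of Voiculescu's Theorem.
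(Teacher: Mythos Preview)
Your proposal is correct, and for the implication ``elementary equivalence $\Rightarrow$ approximate unitary equivalence'' it matches the paper: both of you use Lemmas \ref{SinAd} and \ref{SinAe} to transport rank information across elementarily equivalent structures and then invoke Theorem \ref{Voiculescu}. One small repair: your appeal to ``compactness in $H$'' in the non-compact subcase is not available, since the unit ball of an infinite-dimensional Hilbert space is not norm-compact, and weak compactness will not produce an orthonormal limit. But you do not need to extract a genuine orthonormal sequence. The condition \eqref{EssentialEquation} holding for every $n$ already yields, for each $n$, vectors $w_1,\dots,w_n$ with Gram matrix arbitrarily close to the identity and $\lambda w_i$ arbitrarily close to $\p_2(a)(Ball_1(H))$; this alone forces $\text{rank}(\p_2(a))\geq n$ for all $n$, which is exactly what Voiculescu requires. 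Your three-way split (finite rank / non-compact / compact of infinite rank) is thus finer than necessary---rank equality only distinguishes ``finite $n$'' from ``infinite''---but it is not wrong.

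The genuine difference from the paper is in the other direction. The paper proves ``approximately unitarily equivalent $\Rightarrow$ elementarily equivalent'' via ultraproducts: given unitaries $U_n$ with $U_n\p_1(a)U_n^*\to\p_2(a)$ in norm, one fixes a nonprincipal ultrafilter $\mathcal{U}$ on $\N$ and observes that $\prod_{\mathcal{U}}(H,U_n\p_1 U_n^*)\simeq \prod_{\mathcal{U}}(H,\p_2)$, so that \L o\'s's theorem gives $(H,\p_1)\equiv(H,\p_2)$. Your route---arguing that each sentence mentions only finitely many symbols $\dot a_1,\dots,\dot a_k$ and that its value depends uniformly continuously on the operator-norm interpretations of those symbols, then passing to the limit directly---avoids ultraproducts entirely and is more elementary, at the cost of an (easy) induction on formulas to justify the uniform continuity. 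The paper's argument is shorter because it offloads that work to standard ultraproduct machinery. Both approaches are valid.
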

\begin{proof}
   \begin{itemize}
	\item[$\Rightarrow$] Suppose $(H,\p_1)\equiv(H,\p_2)$. Let $a\in Ball_1(\A)$ and assume that $\text{rank}(\p_1(a))=n<\infty$ then Condition (\ref{DiscreteEquation}) will hold in  $(H,\p_1)$. By elementary equivalence, Condition (\ref{DiscreteEquation}) will hold in  $(H,\p_2)$ and therefore $\text{rank}(\p_2(a))=n$.
	In the same way, if $\text{rank}(\p_1(a))=\infty$, Condition (\ref{EssentialEquation}) will hold in the structure $(H,\p_1)$ for every $n$. By elementary equivalence, Condition (\ref{EssentialEquation}) will hold for every $a\in\A$ and every $n$ in the structure $(H,\p_2)$ and $\text{rank}(\p_2(a))=\infty$.
	This implies that the hypotesis of Theorem \ref{Voiculescu} hold, and therefore $\p_1$ and $\p_2$ are approximately unitarily equivalent.
	\item[$\Leftarrow$] Suppose $\p_1$ and $\p_2$ are approximately unitarily equivalent. Then, there exists a sequence of unitary operators $(U_n)_{n<\omega}$ such that for every $a\in\A$, $\p_2(a)=\lim_{n\to\infty} U_n\p_1(a)U_n^*$. Let $\mathcal{F}$ be a non-principal ultrafilter over $\N$. Then $\Pi_{\FF}(H,U_n\p_1(A)U_n^*)=\Pi_{\FF}(H,\p_2)$. On the other hand, since for every $n$, $(H,U_n\p_1(A)U_n^*)\simeq (H,\p_1)$, then $\Pi_{\FF}(H,U_n\p_1(A)U_n^*)\simeq \Pi_{\FF}(H,\p_1)$. So, $\Pi_{\FF}(H,\p_1)\simeq \Pi_{\FF}(H,\p_2)$ and therefore $(H,\p_1)\equiv(H,\p_2)$.
   \end{itemize}
\end{proof}

\bth\label{equiunel}
Let $\A$ be a $C^*$-algebra, $H_1$ and $H_2$ be Hilbert spaces, and $\p_1$ and $\p_2$ be two representations of $\A$ on $H_1$ and $H_2$ respectively. Then the structures $(H_1,\p_1)$ and $(H_2,\p_2)$ are elementarily equivalent if and only if for all $a\in\A$, $\text{rank}(\p_1(a))=\text{rank}(\p_2(a))$.
\eth
\bpf
\bdc
\item[$\Rightarrow$] Suppose $(H_1,\p_1)$ and $(H_2,\p_2)$ are elementarily equivalent and let $a\in\A$. By Theorem \ref{SinAe} and Theorem \ref{SinAd}, $\text{rank}(\p(a))=n$ and $\text{rank}(\p(a))\geq n$ are sets of conditions in $L(\A)$. By elementary equivalence, $\text{rank}(\p_1(a))=\text{rank}(\p_2(a))$.
\item[$\Leftarrow$] Let $(H_1,\p_1)$ and $(H_2,\p_2)$ be such that $\text{rank}(\p_1(a))=\text{rank}(\p_2(a))$, and let $\f(a_1,\cdots,a_n)=0$ be a condition in $L(\A)$. Let $\hat{\A}\subseteq \A$ be the unital sub $C^*$-algebra of $\A$ generated by $\ba=(a_1,\cdots,a_n)$, and $\hat{\p}_1$ and $\hat{\p}_2$ be the restrictions of $\p_1$ and $\p_2$ to $\hat{\A}$ (note that $\hat{\p}_1(e)=I=\hat{\p}_1(e)$). Then $\hat{\A}$ is separable and by L\" owenheim-Skolem Theorem and Fact \ref{pinondegenerateifandonlyifH=eH},  there are two separable non-degenerate representations $(\tilde{H}_1,\tilde{\p}_1)$ and $(\tilde{H}_2,\tilde{\p}_2)$ of $\hat{\A}$ which are elementary substructures of $(H_1,\hat{\p}_1)$ and $(H_2,\hat{\p}_2)$ respectively. By Theorem \ref{Voiculescu} $(\tilde{H}_1,\tilde{\p}_1)$ is approximately unitarily equivalent to $(\tilde{H}_2,\tilde{\p}_2)$. By the previous lemma, $(\tilde{H}_1,\tilde{\p}_1)$ and $(\tilde{H}_2,\tilde{\p}_2)$ are elementary equivalent.

Then, $(\hat{H}_1,\hat{\p}_1)\models \f(a_1,\cdots,a_n)=0$ if and only if $(\hat{H}_2,\hat{\p}_2)\models \f(a_1,\cdots,a_n)=0$. But $(H,\p_1)\models \f(a_1,\cdots,a_n)=0$ if and only if $(\hat{H}_1,\hat{\p}_1)\models \f(a_1,\cdots,a_n)=0$ and $(H,\p_2)\models \f(a_1,\cdots,a_n)=0$ if and only if $(\hat{H}_2,\hat{\p}_2)\models \f(a_1,\cdots,a_n)=0$. Then $(H,\p_1)\models \f(a_1,\cdots,a_n)=0$ if and only if $(H,\p_2)\models \f(a_1,\cdots,a_n)=0$.
\edc
\epf

\bfc[Lemma I.10.7 in \cite{Da}]\label{compactrepresentations1}
Let $\A$ be an algebra of compact operators on a Hilbert space $H$. Every non-degenerate representation of $\A$ is a direct sum of irreducible representations which are unitarily equivalent to subrepresentations of the identity representation.
\efc

\bnt
For a Hilbert space $H$ and a positive integer $n$, $H^{(n)}$ denotes the direct sum of $n$ copies of $H$. If $S\in B(H)$,  $S^{(n)}$ denotes the operator on $H^{(n)}$ given by $S^{n}(v_1,\cdots,v_n)=(Sv_1,\cdots,Sv_n)$. If $\mathcal{B}\subseteq B(H)$, $\mathcal{B}^{(n)}$ is the set $\{S^{(n)}\ |\ S\in\mathcal{B}\}$.
\ent

\bdf
A representation $(H,\p)$ of $\A$ is called \textit{compact} if $\p(\A)\subseteq \mathcal{K}(H)$, where $\mathcal{K}(H)$ is the algebra of compact operators on $H$.
\edf

\bth[Theorem I.10.8 in \cite{Da}]\label{compactrepresentations2}
Let $(H,\p)$ be a compact representation of $\A$. Then for every $i\in\Z^+$, there are Hilbert spaces $H_i$ and positive integers $n_i$ and $k_i$ such that dim$(H_i)=n_i$ and
\[H\simeq \text{ker}(\p(\A))\oplus\bigoplus_{i\in \Z^+}H_i^{(k_i)}\]
and
\[\p(\A)\simeq 0\oplus\bigoplus_{i\in \Z^+}\mathcal{K}(H_i)^{(k_i)}\]
\eth

\brm
In case that $ker(\A)=0$, ($\A$ no necessarilly unital) we have that this representation is non-degenerate.
\erm

\brm
Recall that if $R\in\bigoplus_{i\in \Z^+} \mathcal{K}(H_i)^{(k_i)}$, then there is a sequence $(R_i)_{i\in \Z^+}$ such that $R_i\in\mathcal{K}(H_i)^{(k_i)}$ and $R=\sum_{i\in \Z^+}R_i$ in the norm topology. This means, in particular, that $\lim_{i\to\infty}\|R_i\|=0$.
\erm

\bdf\label{discreteparts}
Let $(H,\p)$ be a representation of $\A$. We define:
\bdc
\item[The \textit{essential part} of $\p$] It is the $C^*$-algebra homomorphism, \[\p_e:=\r\circ\p:\A\to B(H)/\mathcal{K}(H)\] of $\p(\A)$, where $\r$ is the canonical proyection of $B(H)$ onto the Calkin Algebra $B(H)/\mathcal{K}(H)$.
\item[The \textit{discrete part} of $\p$] It is the restriction,
\begin{align*}
\p_d:ker(\p_e)&\to \mathcal{K}(H)\\
a &\to \p(a)
\end{align*}
\item[The \textit{discrete part} of $\p(\A)$] It is defined in the following way:
\[\p(\A)_d:=\p(\A)\cap \mathcal{K}(H).\]
\item[The \textit{essential part} of $\p(\A)$] It is the image $\p(\A)_e$ of $\p(\A)$ in the Calkin Algebra.
\item[The \textit{essential part} of $H$] It is defined in the following way:
\[H_e:=\text{ker}(\p(\A)_d)\]
\item[The \textit{discrete part} of $H$] It is defined in the following way:
\[H_d:=\text{ker}(\p(\A)_d)^\perp\]
\item[The \textit{essential part} of a vector $v\in H$] It is the projection $v_e$ of $v$ over $H_e$.
\item[The \textit{discrete part} of a vector $v\in H$] It is the projection $v_d$ of $v$ over $H_d$.
\item[The \textit{essential part} of a set $E\subseteq H$] It is the set
\[E_e:=\{v_e\ |\ v\in E\}\]
\item[The \textit{discrete part} of a set $G\subseteq H$] It is the set
\[E_d:=\{v_d\ |\ v\in G\}\]
\edc
\edf

\blm\label{equivalentcompactrepresentations}
Let $(H_1,\p_1)$ and $(H_2,\p_2)$ be two non-degenerate representations of $\A$. If $(H_1,\p_1)\equiv(H_2,\p_2)$ then $\bigl((H_1)_d,(\p_1)_d\bigr)\simeq\bigl((H_2)_d,(\p_2)_d\bigr)$.
\elm
\bpf
For a given representation $\p$, let $\p(\A)_f$ be the (not necessarilly closed) algebra of finite rank operators in $\p(\A)$. If $(H_1,\p_1)\equiv(H_2,\p_2)$, by Lemma \ref{SinAd}, $\p_1(\A)_f\simeq\p_2(\A)_f$ and by density of $\p(\A)_f$ in $\p(\A)_d$, we have that $\p_1(\A)_d\simeq\p_2(\A)_d$. Let $\mathcal{B}:= \p_1(\A)_d\simeq\p_2(\A)_d$. Since $(H_1)_d$ and $(H_2)_d$ are the orthogonal complements of $ker(\mathcal{B})$ in $H_1$ and $H_2$ respectively, we get that $\bigl((H_1)_d,(\p_1)_d\bigr)$ and $\bigl((H_2)_d,(\p_2)_d\bigr)$ are non-degenerate representations of $\mathcal{B}$. Then by Fact \ref{compactrepresentations1} and Theorem \ref{compactrepresentations2}, $\bigl((H_1)_d,(\p_1)_d\bigr)\simeq\bigl((H_2)_d,(\p_2)_d\bigr)$.
\epf

\brm
For $E\subseteq H$, $(H_E)_e=H_{E_e}$ and $(H_E)_d=H_{E_d}$
\erm

\begin{defin}
Let $IHS_{\A,\p}$ be the theory $IHS_\A$ with the following aditional conditions:
    \begin{enumerate}
	\item For $a\in Ball_1(\A)$ such that $\p(a)$ is a non-compact operator on $H$, let $\l_{\p(a)}$, $(u_i)_{i\in\N}$ and $(w_i)_{i\in\N}$ be as described in Lemma \ref{LemmaNonCompactOperator}. For $n\in\N$
\begin{multline*}\label{EssentialEquation}
\inf_{u_1, u_2 \cdots u_n} \inf_{w_1, w_2 \cdots w_n}\max_{i,j=1,\cdots,n}(|\langle w_i\ |\ w_j\rangle-\d_{ij}|,|au_i-\l_{\p(a)}w_i| \bigr)=0
\end{multline*}	
	\item For $a\in Ball_1(\A)$, such that rank$(\p(a))= n\in\N$. Let $\a_1,\cdots,\a_n$ be complex number as described in Remark \ref{RemarkOperatorRankN}.
\begin{multline*}
\exists u_1 u_2 \cdots u_n\exists w_1 w_2 \cdots w_n \bigl(\bigwedge_{i,j=1,\cdots,n}\langle w_i\ |\ w_j\rangle=\d_{ij}\bigr)\wedge\\
\wedge\forall v (\dot{a} v=\sum_{k=1}^n\a_i\langle v\ |\ u_i\rangle w_i)
\end{multline*}
\end{enumerate}
\end{defin}

\brm
We gave in Lemmas \ref{SinAe} and \ref{SinAd} the complete continuous logic formalism only for these two conditions. We omit an explicit condition describing compact infinite rank operators in $\p(\A)$ because they completely determined by the finite rank operators in $\p(\A)$.
\erm

\begin{coro}\label{Htheory}
$IHS_{\A,\p}$ axiomatizes the theory $Th(H,\p)$.
\end{coro}
\begin{proof}
By Theorem \ref{equiunel}.
\end{proof}

\brm
Since every model of $IHS_\A$ is a non-degenerate representation $(H,\p)$, previous corollary shows that the completions of $IHS_\A$ of the form $IHS_{\A,\p}$ for some $\p$. 
\erm

\section{the models of $IHS_{\A,\p}$}\label{SectionModelsOfTpi}

In this section we provide an explicit description of the models of the theory $IHS_{\A,\p}$. This description can be summarized by stating that every model of $IHS_{\A,\p}$ can be decomposed into a fixed part (the \textit{discrete part}) and a variable part (the \textit{essential part}). Finally we get an explicit description of the monster model of $IHS_{\A,\p}$ (Theorem \ref{MonsterModel}).

\begin{defin}
Let $\A^\prime$ be the dual space of $\A$. An element $\f\in\A^\prime$ is called \textit{positive} if $\f(a)\geq 0$ whenever $a\in\A$ is positive, i.e. there is $b\in\A$ such that $a=b^*b$. The set of positive functionals is denoted by $\A^\prime_+$.
\end{defin}

\begin{lemma}\label{vectorsdefinepositivelinearfunctionals}
Let $\A$ be a $C^*$-algebra of operators on a Hilbert space $H$, and let $v\in H$. Then the function $\f_v$ on $\A$ such that for every $S\in\A$, $\f_v(S)=\langle Sv\,|\,v\rangle$ is a positive linear functional.
\end{lemma}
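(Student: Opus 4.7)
The plan is to verify the three defining properties of a positive linear functional on $\A$ directly from the definition of the inner product on $H$ and the $C^*$-algebra structure. Since $\f_v(S) = \langle Sv \mid v\rangle$, all three will fall out of standard Hilbert space identities, so the proof will be essentially a short calculation rather than a conceptual argument.

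First, I would establish linearity. For $S, T \in \A$ and $\a, \b \in \C$, linearity of the action of $\A$ on $H$ gives $(\a S + \b T)v = \a Sv + \b Tv$, and then linearity of the inner product in the first argument yields
\[\f_v(\a S + \b T) = \langle \a Sv + \b Tv \mid v\rangle = \a\langle Sv\mid v\rangle + \b\langle Tv\mid v\rangle = \a\f_v(S) + \b\f_v(T).\]
Next, boundedness (hence $\f_v \in \A^\prime$) follows from Cauchy--Schwarz together with the operator norm bound: $|\f_v(S)| \leq \|Sv\|\cdot\|v\| \leq \|S\|\cdot\|v\|^2$, so $\|\f_v\| \leq \|v\|^2$.

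Finally, for positivity, suppose $a \in \A$ is positive, so that $a = b^*b$ for some $b \in \A$. Using that $\A$ is a $C^*$-algebra of operators on $H$ (so $b^*$ is the Hilbert space adjoint of $b$ inside $B(H)$), I compute
\[\f_v(a) = \langle b^*bv \mid v\rangle = \langle bv \mid bv\rangle = \|bv\|^2 \geq 0.\]
This gives all three required properties. There is no real obstacle here; the only subtlety worth noting is that the identification of the algebraic involution $^*$ on $\A$ with the Hilbert space adjoint is exactly what allows the move $\langle b^*bv\mid v\rangle = \langle bv\mid bv\rangle$, and this holds because $\A \subseteq B(H)$ as a $C^*$-subalgebra.
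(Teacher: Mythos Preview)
Your proof is correct and follows essentially the same approach as the paper: linearity is dispatched quickly, and positivity is obtained by writing a positive element as $b^*b$ and using the adjoint relation to get $\langle b^*bv\mid v\rangle=\|bv\|^2\geq 0$ (the paper phrases this via the square root $Q$ of $S$, but the computation is identical). Your explicit verification of boundedness is a small addition the paper omits, but otherwise the arguments coincide.
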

\begin{proof}
Linearity is clear. Let $S$ be a positive selfadjoint operator in $\A$, let $Q$ be its square root, that is, an operator such that $S=QQ^*$. Let $v\in H$; then $\langle Sv\,|\,v\rangle=\langle Q^*Qv\,|\,v\rangle=\langle Qv\,|\,Qv\rangle\geq 0$
\end{proof}

\bdf
Let $\f$ be a positive linear functional on $\A$. Let
\[\L^2(\A,\f)=\{a\in \A\ |\ \f(a^*a)<\infty\}/\sim_\f,\]
where $a_1\sim_\f a_2$ if $\f(a_1^*a_2)=0$.
For $(a)_{\sim_\f}$, $(b)_{\sim_\f}\in \L^2(\A,\f)$, let
\[\langle (a)_{\sim_\f}\ |\ (b)_{\sim_\f}\rangle_\f=\f(a^*b).\]
\edf

\bdf
We define the space $L^2(\A,\f)$ to be the completion of $\L^2(\A,\f)$ under the norm defined by $\langle\cdot\ |\ \cdot\rangle_\f$.
\edf

\brm
The product $\langle\cdot\ |\ \cdot\rangle_\f$ is a natural inner product on the space $\L^2(\A,\f)$(see \cite{Co} page 472).
\erm

\bdf
Let $\f$ be a positive linear functional on $\A$. We define the representation $M_\f:\A\to B(L^2(\A,\f))$ in the following way: For every $a\in\A$ and $(b)_{\sim_\f}\in L^2(\A,\f)$, let $M_\f(a)((b)_{\sim_\f})=(ab)_{\sim_\f}$.
\edf

\begin{defin}
$(H,\p)$ is called \textit{cyclic} if there exists a vector $v_\p$ such that $\p(\A)v_\p$ is dense in $H$. Such a vector is called a \textit{cyclic vector} for the representation $(H,\p)$. If $v_\p$ is a cyclic vector for $(H,\p)$, we denote it by $(H,\p,v_\p)$.
\end{defin}

\begin{theo}[Theorem 3.3.3. and Remark 3.4.1. in \cite{Pe}]\label{GelfandNaimarkSegal}
Let $\f$ be a positive functional on $\A$. Then there exists a cyclic representation $(H_\f,\p_\f,v_\f)$ such that for all $a\in\A$, $\f(a)=\langle \p_\f(a)v_\f|v_\f\rangle$. This representation is called the \textit{Gelfand-Naimark-Segal construction}.
\end{theo}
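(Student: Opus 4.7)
The plan is to take the cyclic representation to be essentially the one implicit in the definitions preceding the theorem. Set $H_\f:=L^2(\A,\f)$, take $\p_\f:=M_\f$, and let $v_\f:=(e)_{\sim_\f}$ where $e$ is the identity of $\A$. With these choices, the verification reduces to three tasks: show that $M_\f(a)$ descends to a well-defined bounded operator on $L^2(\A,\f)$ for each $a\in\A$; verify that $a\mapsto M_\f(a)$ is a $*$-homomorphism of $\A$ into $B(L^2(\A,\f))$; and check that $v_\f$ is cyclic and satisfies $\f(a)=\langle \p_\f(a)v_\f\mid v_\f\rangle_\f$.

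The main obstacle is the boundedness of $M_\f(a)$, which is what simultaneously makes well-definedness on equivalence classes, extension from $\L^2(\A,\f)$ to the completion $L^2(\A,\f)$, and the norm estimate $\|\p_\f(a)\|\leq\|a\|$ go through. The key ingredient is the operator inequality $\|a\|^2 e-a^*a\geq 0$ inside the $C^*$-algebra $\A$, which follows from spectral calculus. Conjugation by $c\in\A$ preserves positivity, so $\f(c^*(\|a\|^2e-a^*a)c)\geq 0$ by positivity of $\f$, giving
\[\|M_\f(a)(c)_{\sim_\f}\|_\f^2=\f(c^*a^*ac)\leq \|a\|^2\f(c^*c)=\|a\|^2\|(c)_{\sim_\f}\|_\f^2.\]
In particular, if $c$ lies in the null space $N_\f:=\{b\in\A\mid \f(b^*b)=0\}$, then so does $ac$; hence left multiplication by $a$ descends to the quotient, and the same inequality allows it to extend continuously to all of $L^2(\A,\f)$ with norm at most $\|a\|$.

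Once boundedness is in hand, the remaining checks are formal. Linearity and multiplicativity of $a\mapsto M_\f(a)$ are immediate from the definition $M_\f(a)(b)_{\sim_\f}=(ab)_{\sim_\f}$. For the $*$-structure,
\[\langle M_\f(a)(b)_{\sim_\f}\mid(c)_{\sim_\f}\rangle_\f=\f((ab)^*c)=\f(b^*a^*c)=\langle (b)_{\sim_\f}\mid M_\f(a^*)(c)_{\sim_\f}\rangle_\f,\]
so $M_\f(a)^*=M_\f(a^*)$. For cyclicity, $\p_\f(\A)v_\f=\{(ae)_{\sim_\f}\mid a\in\A\}=\{(a)_{\sim_\f}\mid a\in\A\}$ since $\A$ is unital, and this is precisely the dense subspace $\L^2(\A,\f)\subseteq H_\f$. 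Finally,
\[\langle \p_\f(a)v_\f\mid v_\f\rangle_\f=\langle (a)_{\sim_\f}\mid (e)_{\sim_\f}\rangle_\f=\f(e^*a)=\f(a),\]
using $e^*=e$, which completes the construction.
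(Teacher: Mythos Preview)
Your proposal is correct and follows exactly the same route as the paper: take $(H_\f,\p_\f,v_\f)=(L^2(\A,\f),M_\f,(e)_{\sim_\f})$ and verify the inner-product identity. The paper's proof is a one-liner that only writes out the final computation $\langle M_\f(a)(e)_{\sim_\f}\mid (e)_{\sim_\f}\rangle=\f(a)$, treating well-definedness, boundedness, the $*$-homomorphism property, and cyclicity as implicit in the preceding definitions; you have simply filled in those details explicitly.
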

\bpf
Take $(L^2(\A,\f_v),M_{\f_v},(e)_{\sim_{\f_v}})$. Note that
\[\langle M_{\f_v}(a)(e)_{\sim_{\f_v}}\ |\ (e)_{\sim_{\f_v}}\rangle=\langle (a)_{\sim_{\f_v}}\ |\ (e)_{\sim_{\f_v}}\rangle=\f_v(a\cdot e)=\f_v(a).\]
\epf

\bdf
 Two cyclic representations $(H_1,\p_1,v_1)$ and $(H_2,\p_2,v_2)$ are said to be \textit{isometrically isomorphic} if there is an isometry $U$ from $H_1$ to $H_2$ such that for every $a\in\A$, $U\p_1(a)U^*=\p_2(a)$ and $Uv_1=v_2$.
\end{defin}

\begin{theo}[Proposition 3.3.7 in \cite{Pe}]\label{UnitarilyEquivalentRepresentations}
Two cyclic representations $(H_1,\p_1,v_1)$ and $(H_2,\p_2,v_2)$ are isometrically isomorphic if and only if for all $a\in\A$, $\langle \p_1(a)v_1|v_1\rangle=\langle \p_2(a)v_2|v_2\rangle$.
\end{theo}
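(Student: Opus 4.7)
The plan is to prove the forward direction by a direct computation, and the reverse direction by building the intertwining isometry from the two cyclic vectors via the polarization identity in the guise of the GNS construction.

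First I would dispatch necessity. If $U:H_1\to H_2$ is a surjective isometry with $Uv_1=v_2$ and $U\pi_1(a)U^*=\pi_2(a)$ for every $a\in\A$, then $U^*U=I_{H_1}$ and
\[
\langle\pi_2(a)v_2\,|\,v_2\rangle=\langle U\pi_1(a)U^*Uv_1\,|\,Uv_1\rangle=\langle U\pi_1(a)v_1\,|\,Uv_1\rangle=\langle\pi_1(a)v_1\,|\,v_1\rangle,
\]
using that $U$ is an isometry in the last step.

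For sufficiency, assume $\langle\pi_1(a)v_1\,|\,v_1\rangle=\langle\pi_2(a)v_2\,|\,v_2\rangle$ for every $a\in\A$. I would define $U_0:\pi_1(\A)v_1\to\pi_2(\A)v_2$ by $U_0(\pi_1(a)v_1):=\pi_2(a)v_2$. The key computation is
\[
\|\pi_1(a)v_1\|^2=\langle\pi_1(a^*a)v_1\,|\,v_1\rangle=\langle\pi_2(a^*a)v_2\,|\,v_2\rangle=\|\pi_2(a)v_2\|^2,
\]
which simultaneously shows that $U_0$ is well-defined (if $\pi_1(a)v_1=\pi_1(b)v_1$, apply the identity to $a-b$) and norm-preserving. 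Since $\pi_1(\A)v_1$ and $\pi_2(\A)v_2$ are dense in $H_1$ and $H_2$ respectively by cyclicity, $U_0$ extends uniquely to a linear isometry $U:H_1\to H_2$ whose image is dense and closed, hence equal to $H_2$; so $U$ is a surjective isometry.

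It remains to check the intertwining relation and $Uv_1=v_2$. For $a,b\in\A$, the definition of $U$ gives
\[
U\pi_1(a)(\pi_1(b)v_1)=U(\pi_1(ab)v_1)=\pi_2(ab)v_2=\pi_2(a)U(\pi_1(b)v_1),
\]
so $U\pi_1(a)=\pi_2(a)U$ on the dense set $\pi_1(\A)v_1$, and by continuity on all of $H_1$, whence $U\pi_1(a)U^*=\pi_2(a)$. Finally, since the representations are non-degenerate, Fact \ref{pinondegenerateifandonlyifH=eH} gives $\pi_i(e)=I$, so $Uv_1=U\pi_1(e)v_1=\pi_2(e)v_2=v_2$. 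The main subtle point in the argument is the well-definedness of $U_0$, which is exactly where the hypothesis on the diagonal matrix coefficients is used through the identity $\|\pi(a)v\|^2=\langle\pi(a^*a)v\,|\,v\rangle$; everything else is formal manipulation together with the density provided by cyclicity.
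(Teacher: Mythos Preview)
Your proof is correct and is precisely the standard argument (the uniqueness half of the GNS construction). The paper itself does not give a proof of this statement; it merely cites Proposition~3.3.7 in Pedersen, and your argument is essentially the one found there.
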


\begin{theo}\label{HvL2}
Let $v\in H$. Then $(H_v,\p_v,v)\simeq (L^2(\A,\f_v),M_{\f_v},(e)_{\sim_{\f_v}})$.
\end{theo}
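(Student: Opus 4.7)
The plan is to apply Theorem \ref{UnitarilyEquivalentRepresentations}, which characterizes isometric isomorphism of cyclic representations via the equality of their matrix coefficients. So I need to check that both sides are genuinely cyclic representations and that their matrix coefficients at the distinguished vectors coincide.

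First I would verify the cyclicity requirement. On the right-hand side, $(L^2(\A,\f_v), M_{\f_v}, (e)_{\sim_{\f_v}})$ is cyclic by the Gelfand--Naimark--Segal construction (Theorem \ref{GelfandNaimarkSegal}), since $M_{\f_v}(\A)(e)_{\sim_{\f_v}}=\{(a)_{\sim_{\f_v}} : a\in\A\}$ is (the image of) $\A$ itself, hence dense in $L^2(\A,\f_v)$ by definition. On the left-hand side, $v$ is a cyclic vector for $\p_v$ on $H_v$ by Remark \ref{CyclicVector}: indeed $H_v$ is, by Definition \ref{DefinitionHv}, the closure of $\p(\A)v$, and since $\A$ is unital and $\p$ is non-degenerate we have $v=\p(e)v\in H_v$.

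Next I would compute the two matrix coefficients at the distinguished cyclic vectors and show they agree with $\f_v$. For the left-hand side, by the very definition of $\f_v$ in Lemma \ref{vectorsdefinepositivelinearfunctionals},
\[
\langle \p_v(a)v \mid v\rangle_H = \langle \p(a)v\mid v\rangle_H = \f_v(a).
\]
For the right-hand side, the computation carried out in the proof of Theorem \ref{GelfandNaimarkSegal} gives
\[
\langle M_{\f_v}(a)(e)_{\sim_{\f_v}} \mid (e)_{\sim_{\f_v}} \rangle_{\f_v} = \f_v(a).
\]

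With these two identities in hand, Theorem \ref{UnitarilyEquivalentRepresentations} applies directly and produces the required isometric isomorphism of cyclic representations. There is no real obstacle here: the statement is essentially the uniqueness half of GNS, and the only thing to watch is that one uses the correct cyclic vector $v$ (not some other generator of $H_v$) so that the matrix coefficient on the $H$-side is exactly $\f_v$, matching the construction on the $L^2(\A,\f_v)$-side.
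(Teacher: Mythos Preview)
Your proof is correct and follows exactly the paper's approach: the paper's proof consists of the single line ``By Gelfand-Naimark-Segal Theorem \ref{GelfandNaimarkSegal} and Theorem \ref{UnitarilyEquivalentRepresentations},'' and you have simply unpacked this by verifying cyclicity on both sides and checking that both matrix coefficients equal $\f_v$.
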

\bpf
By Gelfand-Naimark-Segal Theorem \ref{GelfandNaimarkSegal} and Theorem \ref{UnitarilyEquivalentRepresentations}.
\epf

\bth[Remark 3.3.1. in \cite{Pe}]\label{RemarkEveryRepresentationIsADirectSumOfCyclicRepresentations}
Every representation can be seen as a direct sum of cyclic representations.
\eth

\begin{defin}\label{definStates}
We define the following (see \cite{Pe}):
\ben
\item A positive linear functional $\f$ on $\A$ is called a \textit{quasistate} if $\|\f\|\leq 1$.
\item The set of the of quasistates on $\A$ is denoted by $Q_\A$.
\item In the case where $\|\f\|=1$, the positive linear functional $\f$ is called a \textit{state}.
\item The set of states is denoted by $S_\A$.
\item A state is called \textit{pure} if it is not a convex combination of other states.
\item The set of pure states is denoted by $PS_\A$.
\een
\end{defin}

\begin{defin}
Let $(H_i,\p_i)$ for $i\in I$ be a family of representations of $\A$. We define a representation $\oplus\p_i$ on $\oplus H_i$ in the following way:
Let $v=\sum_iv_i$ and $a\in\A$, $\oplus\p_i(a)v=\sum_i\p_i(a)v_i$.
\end{defin}

\begin{defin}\label{HA}
Let $H_{S_\A}$ be the space,
\[H_{S_\A}=\oplus_{\f\in S_\A} L^2(\A,\f)\]
and let $\p_{S_\A}$ be,
\[\p_{S_\A}=\oplus_{\f\in S_\A} M_\f,\]
\end{defin}

\begin{defin}\label{DefinitionHv}
Given $E\subseteq H$ and $v\in H$, we denote by:
\ben
\item $H_E$, the Hilbert subspace of $H$ generated by the elements $\p(a)v$, where $v\in E$ and $a\in\A$.
\item $\p_E:=\{\p(a)\upharpoonright H_E\ |\ a\in \A\}$.
\item $(H_E,\p_E)$, the subrepresentation of $(H,\p)$ generated by $E$.
\item $H_v$, the space $H_E$ when $E=\{v\}$ for some vector $v\in H$
\item $\p_v:=\p_E$ when $E=\{v\}$.
\item $(H_v,\p_v)$, the subrepresentation of $(H,\p)$ generated by $v$.
\item $H_E^\perp$, the orthogonal complement of $H_E$
\item $P_E$, the projection over $H_E$.
\item $P_{E^\perp}$, the projection over $H_E^\perp$.
\een
\end{defin}

\begin{rem}\label{CyclicVector}
For $v\in H$, it is clear that $v$ is a cyclic vector for $\A$ on $H_v$.
\end{rem}

\brm\label{RemarkProjectionsOnTuples}
For a tuple $\bv=(v_1,\dots,v_n)$, by $P_E\bv$ we denote the tuple $(P_Ev_1,\dots,P_Ev_n)$.
\erm

\bfc\label{Hdsubsetalgebraicclosureofemptyset}
Let $v\in H_d$. Then $v$ is algebraic over $\emptyset$.
\efc
\bpf
If $v\in H_d$ by Theorem \ref{compactrepresentations2}, there exist a sequence $v_i$ of vectors in $H_d$ such that $v_i\in H_i^{k_i}$, and $v=\sum_{i\geq 1}v_k$. Given that $\|v_k\|\to 0$ when $k\to\infty$, the orbit of $v$ under any automorphism $U$ of $(H,\p)$ is a Hilbert cube which is compact, which implies that $v$ is algebraic.
\epf

\bfc[Proposition 2.7 in \cite{HeTe}]\label{LemmaHensonTellez}
Let $\MM$ and $\mathcal{N}$ be $\mathcal{L}$-structures, $A\subseteq M$ and $B\subseteq N$. If $f:A\to B$ is an elementary map, then there is an elementary map $g:acl_\MM(A)\to acl_{\mathcal{N}}(B)$ extending $f$. Moreover, if $f$ is onto, then so is $g$.
\efc

\bth\label{equivalentrepresentations}
Let $(H_1,\p_1)$ and $(H_2,\p_2)$ be two representations of $\A$. Then $(H_1,\p_1)\equiv(H_2,\p_2)$ if and only if
\[\bigl((H_1)_d,(\p_1)_d\bigr)\simeq\bigl((H_2)_d,(\p_2)_d\bigr)\]
 and
 \[((H_1)_e,(\p_1)_e)\equiv((H_2)_e,(\p_2)_e)\]
\eth
\bpf
By Theorem \ref{equiunel}, $(H_1,\p_1)\equiv(H_2,\p_2)$ if and only if $((H_1)_d, (\p_1)_d)\equiv((H_2)_d,(\p_2)_d)$ and $((H_1)_e,(\p_1)_e)\equiv((H_2)_e,(\p_2)_e)$. By Lemma \ref{equivalentcompactrepresentations}, this is equivalent to $\bigl((H_1)_d,(\p_1)_d\bigr)\simeq\bigl((H_2)_d,(\p_2)_d\bigr)$ and $((H_1)_e,(\p_1)_e)\equiv((H_2)_e,(\p_2)_e)$.
\epf

\begin{theo}\label{MonsterModel}
Let $\k\geq|S_\A|$ be such that cf$(\k)=\k$. Then the structure \[(\tilde{H}_\k,\tilde{\p}_\k)= (H_d,\p_d)\oplus\bigoplus_\k(H_{S_{\p(\A)_e}},\p_{S_{\p(\A)_e}})\] is $\k$ universal, $\k$ homogeneous and is a monster model for $Th(H,\p)$.
\end{theo}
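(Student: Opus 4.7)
The plan has three parts: check that $(\tilde{H}_\k,\tilde{\p}_\k)\models Th(H,\p)$, establish $\k$-universality, and establish $\k$-homogeneity. Together these give the monster-model conclusion.

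For modelhood, I would invoke Theorem \ref{equivalentrepresentations} to reduce to matching discrete and essential parts with those of $(H,\p)$. Because the $\k$ copies of $(H_{S_{\p(\A)_e}},\p_{S_{\p(\A)_e}})$ are identical and $\k$ is infinite, an operator $\tilde{\p}_\k(a)$ is compact iff $\p_{S_{\p(\A)_e}}(a)=0$, iff $[\p(a)]=0$ in the Calkin algebra (by faithfulness of the universal representation), iff $\p(a)$ is compact. This shows $((\tilde{H}_\k)_d,(\tilde{\p}_\k)_d)\simeq (H_d,\p_d)$ and identifies the essential part of $\tilde{\p}_\k$ as $\bigoplus_\k(H_{S_{\p(\A)_e}},\p_{S_{\p(\A)_e}})$. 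On the essential side, every $a\in\A$ acts with rank either $0$ (when $\p(a)$ is compact, making $\p\upharpoonright H_e\ (a)=0$) or infinite (a finite positive rank would force $\p(a)$ itself to be compact), matching the rank behaviour on $(H_e,\p\upharpoonright H_e)$; Theorem \ref{equiunel} then yields elementary equivalence of essential parts and Theorem \ref{equivalentrepresentations} finishes the check.

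For $\k$-universality, let $(M,\s)\models Th(H,\p)$ have density character $<\k$. By Theorem \ref{equivalentrepresentations} the discrete part $(M_d,\s_d)$ is isomorphic to $(H_d,\p_d)$ and thus embeds identically into $\tilde{H}_\k$. The essential part $(M_e,\s_e)$ decomposes as a direct sum of fewer than $\k$ cyclic sub-representations by Theorem \ref{RemarkEveryRepresentationIsADirectSumOfCyclicRepresentations}; each such sub-representation is isomorphic via Theorem \ref{HvL2} to a GNS construction $(L^2(\A,\f),M_\f,(e)_{\sim_\f})$, whose defining positive functional $\f$ vanishes on those $a$ with $\p(a)$ compact (since the cyclic vector lies in the essential part) and so factors through $\p(\A)_e$, giving an isometric sub-representation of $H_{S_{\p(\A)_e}}$. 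Distributing these cyclic summands into distinct copies among the $\k$ available copies produces a representation embedding $j:M\hookrightarrow\tilde{H}_\k$. Elementarity of $j$ follows because $j(M)$ is a reducing subspace whose discrete and essential rank profiles match those of $\tilde{H}_\k$, so that any intermediate sub-representation between $j(M)$ and $\tilde{H}_\k$ satisfies the hypotheses of Theorem \ref{equivalentrepresentations} and the Tarski--Vaught criterion goes through.

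For $\k$-homogeneity, let $f:A\to B$ be partial elementary with $|A|,|B|<\k$. The inner-product data preserved by $f$ pins down the positive linear functionals $\f_a=\f_{f(a)}$, so by Theorem \ref{UnitarilyEquivalentRepresentations} applied cyclic-summand by cyclic-summand, $f$ extends to a representation isomorphism $\hat f:(H_A,\p_A)\to (H_B,\p_B)$; Lemma \ref{LemmaHensonTellez} further extends it across algebraic closures. The main obstacle, and the heart of the argument, is then extending $\hat f$ to an automorphism of all of $\tilde{H}_\k$: this requires an isomorphism between the orthogonal complements of $H_{\text{acl}(A)}$ and $H_{\text{acl}(B)}$ inside $\tilde{H}_\k$ that is compatible with the operator action. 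By Lemma \ref{Hdsubsetalgebraicclosureofemptyset} the discrete part $(H_d,\p_d)$ lies in $\text{acl}(\emptyset)$ on both sides and is therefore already absorbed, so each complement is an essential representation obtained by deleting fewer than $\k$ cyclic summands from $\bigoplus_\k H_{S_{\p(\A)_e}}$. The hypothesis $\text{cf}(\k)=\k$ guarantees the remainder is still isomorphic to $\bigoplus_\k H_{S_{\p(\A)_e}}$, producing an isomorphism between the two complements and completing the extension to an automorphism of $\tilde{H}_\k$.
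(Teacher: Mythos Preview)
Your proposal is correct and follows essentially the same three-part structure as the paper's proof: modelhood via Theorem \ref{equivalentrepresentations} and the rank dichotomy on the essential part, $\k$-universality via decomposition into cyclic GNS summands (Theorems \ref{RemarkEveryRepresentationIsADirectSumOfCyclicRepresentations} and \ref{HvL2}) distributed among the $\k$ copies, and $\k$-homogeneity by extending the partial map first to $H_A\to H_B$, then to algebraic closures via Lemma \ref{LemmaHensonTellez}, and finally across the complements using that $\k$-many summands remain on each side. Your version is in fact slightly more explicit than the paper's in two places---you spell out why $\tilde{\p}_\k(a)$ is compact exactly when $\p(a)$ is (via faithfulness of the universal representation of $\p(\A)_e$), and you name the role of $\mathrm{cf}(\k)=\k$ in guaranteeing that the complements still contain a full $\bigoplus_\k H_{S_{\p(\A)_e}}$---but these are clarifications of the same argument, not a different route.
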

\begin{proof}
Let us denote $(\tilde{H}_\k,\tilde{\p}_\k)$ just by $(\tilde{H},\tilde{\p})$.
\bdc
\item[$(\tilde{H},\tilde{\p})\models Th(H,\p)$] For every $a\in Ball_1(\A)$, if $rank(a)=\infty$ in $H_e$, then $rank(a)=\infty$ in $\tilde{H}_e$ and if $rank(a)=0$ in $H_e$, then $rank(a)=0$ in $\tilde{H}_e$. By Theorem \ref{equivalentrepresentations} $(H_e,\p_e)\equiv (\tilde{H}_e,\tilde{\p}_e)$. By Theorem \ref{equivalentrepresentations}, $(H,\p)\equiv(\tilde{H},\tilde{\p})$.
\item[$\k$-Universality] Let $(H^\prime,\p^\prime)\models Th(H,\p)$ be a model with density less than $\k$. By Theorem \ref{equivalentrepresentations}, $(H^\prime,\p_d^\prime)\simeq (\tilde{H}_d,\tilde{\p}_d)\simeq (H_d,\p_d)$. Then without loss of generality we can asume that $\p(\A)=\p(\A)_e$. By Theorem \ref{RemarkEveryRepresentationIsADirectSumOfCyclicRepresentations}, there exists a set $I$ and a family $(H_i,\p_i,v_i)_{i\in I}$ of cyclic representations such that $(H^\prime,\p^\prime)=\bigoplus_{i\in I}(H_i,\p_i)$. By Theorem \ref{HvL2}, $(H_{v_i},\p_{v_i},v_i)\simeq (L^2(\A,\f_{v_i}),M_{\f_{v_i}},(e)_{\sim_{\f_{v_i}}})$. Since the density of $(H^\prime,\p^\prime)$ is less than $\k$, the size of $I$ is less than $\k$ and clearly $(H^\prime,\p^\prime)$ is isomorphic to a subrepresentation of $(\tilde{H},\tilde{\p})$.

\item[$\k$-Homogeneity] Let $U$ be a partial elementary map between $E$, $F\subseteq \tilde{H}$ with $|E|=|F|<\k$.
\ben
\item We can extend $U$ to an unitary equivalence between $H_E$ and $H_F$: Let $a_1$, $a_2\in\A$ and $e_1$, $e_2\in E$. Then we define $U(\p(a_1)(e_1)+\p(a_2)(e_2)):=\p(a_1)(U(e_1))+\p(a_2)(U(e_2))$. After this, we extend this constuction continuously to $H_E$.
\item We can extend $U$ to an unitary equivalence between $(H_d\oplus H_{E_e})$ and $(H_d\oplus H_{F_e})$: By Lemma \ref{Hdsubsetalgebraicclosureofemptyset}, $(H_d\oplus H_{E_e})\subseteq acl_{\tilde{H}}(E)$ and $(H_d\oplus H_{F_e})\subseteq acl_{\tilde{H}}(F)$. By Fact \ref{LemmaHensonTellez}, we can extend $U$ in the desired way.
\item We can find an unitary equivalence between $(H_d\oplus H_{E_e})^\perp$ and $(H_d\oplus H_{F_e})^\perp$: Given that $|E|=|F|<\k$, there are two subsets $C_1$ and $C_2$ of $\k$ such that $(H_d\oplus H_{E_e})^\perp = \bigoplus_{C_1}H_{PS_{\p(\A)_e}}$ and $(H_d\oplus H_{E_e})^\perp=\bigoplus_{C_2}H_{PS_{\p(\A)_e}}$. We have that $|C_1|=|C_2|=\k$ and therefore,
\[\bigoplus_{C_1}(H_{S_{\p(\A)_e}},\p_{S_{\p(\A)_e}})\simeq\bigoplus_{C_2}(H_{S_{\p(\A)_e}},\p_{S_{\p(\A)_e}}).\]
 Let $U^\prime$ an isomorphism between
\[\bigoplus_{C_1}(H_{S_{\p(\A)_e}},\p_{S_{\p(\A)_e}})\text{ and }\bigoplus_{C_2}(H_{S_{\A_e}},\p_{S_{\p(\A)_e}}).\]
\item Let $v\in\tilde{H}_\k$. Then $v=v_d+v_{E_e}+v_{E_e^\perp}$, where $v_{E_e}:=P_{E_e}v$ and $v_{E_e^\perp}:=P_{E_e^\perp}v$. Let $w:=Uv_d+Uv_{E_e}+U^\prime v_{E_e^\perp}$, and $U^{\prime\prime}:=U\oplus U^\prime$. Then $w$ and $U^{\prime\prime}$ are such that $U^{\prime\prime}$ is an automorphism of $\tilde{H}_\k$ extending $U$ such that $U^{\prime\prime}v=w$.
\een
\edc
\end{proof}

\section{types and quantifier elimination}\label{sectiontypesquantifierelimination}
In this section we provide a characterization of types in $(H,\p)$. The main results here are Theorem \ref{typeoverempty1} and Theorem \ref{typeoverA} that characterize types in terms of subrepresentations of $(H,\p)$ and, its consecuence, Corollary \ref{QuantifierElimination} that states that $IHS_{\A,\p}$ has quantifier elimination. As in the previous section, we denote by $(\tilde{H},\tilde{\p})$ the monster model for the theory $IHS_{\A,\p}$ as constructed in Theorem \ref{MonsterModel}.

\brm\label{autounitary}
An automorphism $U$ of $(H,\p)$ is a unitary operator $U$ on $H$ such that $U\p(a)=\p(a)U$ for every $a\in Ball_1(\A)$.
\erm
\begin{proof}
Asume $U$ is an automorphism of $(H,\p)$. It is clear that $U$ must be a linear operator. Also, for every $v, w\in H$ and  $\p(a)\in\A$, we must have that $U(\p(a)v)=\p(a)(Uv)$ and $\langle Uv\,|\,Uw\rangle=\langle v\,|\, w\rangle$ by definition of automorphism. Therefore $U$ must be unitary and commutes with the elements of $\p(\A)$. Conversely, if $U$ is an unitary operator commuting with the elements of $\p(\A)$, then $U$ is clearly an automorphism of $(H,\p)$.
\end{proof}

\blm\label{LemmaComponentsOfHdInvariant}
Let
\[H_d=\bigoplus_{i\in \Z^+}H_i^{(k_i)}\]
be as in Theorem \ref{compactrepresentations2}. Let $v\in H_i^{(k_i)}$ for some $i\in \Z^+$ and let $U\in \Aut(H,\p)$. Then $Uv\in H_i^{(k_i)}$.
\elm
\bpf
By Theorem \ref{compactrepresentations2}, $\p(\A)_d=\p(\A)\cap \mathcal{K}(H)$ can be seen as:
\[\p(\A)_d=\bigoplus_{i\in \Z^+}\mathcal{K}(H_i^{(k_i)}).\]
By Remark \ref{autounitary}, any automorphism $U\in \Aut((H,\p))$ commutes with every element of $\p(\A)$, in particular with any element $K$ of $\mathcal{K}(H_i^{(k_i)})$. Thus, if $v\in H_i^{(k_i)}$ and $K\in\mathcal{K}(H_i^{(k_i)})$, $KUv=UKv$. This implies that $Uv\in H_i^{(k_i)}$.
\epf

\begin{theo}\label{typeoverempty1}
Let $v$, $w\in\tilde{H}$. Then $tp(v/\emptyset)=tp(w/\emptyset)$ if and only if $(H_v,\p_v,v)$ is isometrically isomorphic to $(H_w,\p_w,w)$.
\end{theo}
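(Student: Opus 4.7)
The plan is to prove the two implications separately. The forward direction is an immediate consequence of the $\k$-homogeneity of the monster model; the reverse requires extending a given isometric isomorphism of cyclic representations to a full automorphism of $(\tilde H, \tilde \p)$, mimicking the four-step construction used in the proof of Theorem \ref{MonsterModel}.

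For the forward direction, suppose $tp(v/\emptyset)=tp(w/\emptyset)$. By $\k$-homogeneity (Theorem \ref{MonsterModel}) there is an automorphism $U$ of $(\tilde H,\tilde \p)$ with $Uv=w$. By Remark \ref{autounitary}, $U$ is a unitary operator on $\tilde H$ commuting with every $\p(a)$, so $U(\p(a)v)=\p(a)w$ for all $a\in\A$. Taking closures, $U$ restricts to an isometric linear bijection $H_v\to H_w$ sending $v$ to $w$ and intertwining $\p_v$ with $\p_w$; this is exactly the desired isometric isomorphism of cyclic representations.

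For the reverse direction, let $U_0\colon(H_v,\p_v,v)\to(H_w,\p_w,w)$ be an isometric isomorphism. I would extend $U_0$ to an automorphism $U$ of $(\tilde H,\tilde \p)$; then $Uv=w$ witnesses $tp(v/\emptyset)=tp(w/\emptyset)$. Since both summands of $\tilde H=\tilde H_d\oplus \tilde H_e$ are reducing for $\p(\A)$, the projections $P_d,P_e$ commute with $\p$, and $U_0$ restricts to intertwining isomorphisms $H_{v_d}\to H_{w_d}$ and $H_{v_e}\to H_{w_e}$. On the discrete part, Theorem \ref{compactrepresentations2} gives $\tilde H_d\simeq\bigoplus_i H_i^{(k_i)}$ with finite-dimensional irreducibles $H_i$; the cyclic subrepresentations $H_{v_d}$ and $H_{w_d}$ are both of the form $\bigoplus_i H_i^{(m_i)}$ with the same multiplicities $m_i$ (forced by $U_0$), so their orthogonal complements in $\tilde H_d$ are unitarily equivalent by an intertwining map that I splice with $U_0\restriction_{H_{v_d}}$ to extend $U_0$ to an intertwining unitary $U_d$ on $\tilde H_d$. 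On the essential part, I would reproduce the absorption argument from the $\k$-homogeneity step of Theorem \ref{MonsterModel}: since $\tilde H_e\simeq\bigoplus_\k(H_{S_{\p(\A)_e}},\p_{S_{\p(\A)_e}})$ and $H_{v_e}$, $H_{w_e}$ are separable, both $\tilde H_e\ominus H_{v_e}$ and $\tilde H_e\ominus H_{w_e}$ can be re-decomposed as direct sums of $\k$ copies of $H_{S_{\p(\A)_e}}$, hence there is an intertwining unitary between them which together with $U_0\restriction_{H_{v_e}}$ extends $U_0$ on $\tilde H_e$. Gluing yields the required $U$.

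The main obstacle I anticipate is the absorption step on the essential part: showing that the orthogonal complement in $\tilde H_e$ of a separable cyclic subrepresentation is again unitarily equivalent to a direct sum of $\k$ copies of $H_{S_{\p(\A)_e}}$. This uses that every cyclic representation of $\p(\A)_e$ is isometrically isomorphic to some $(L^2(\p(\A)_e,\f),M_\f,(e)_{\sim_\f})$ with $\f\in S_{\p(\A)_e}$ (Theorem \ref{HvL2} and Theorem \ref{GelfandNaimarkSegal}), so it sits as a summand inside $H_{S_{\p(\A)_e}}$, together with the cardinal arithmetic $\k+\lambda=\k$ for $\lambda<\k$. Once this is granted, the rest of the argument is a routine gluing of intertwining unitaries along the $\tilde H_d$–$\tilde H_e$ decomposition.
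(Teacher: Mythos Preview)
Your proposal is correct and follows essentially the same strategy as the paper: the forward direction uses homogeneity of the monster model to obtain an automorphism whose restriction gives the isometric isomorphism, and the reverse direction extends the given isometric isomorphism $U_0$ to a global automorphism by matching up the orthogonal complements via the explicit description of $(\tilde H,\tilde\p)$ from Theorem~\ref{MonsterModel} and Remark~\ref{RemarkDecomposedMonsterModel}. Your write-up is in fact more detailed than the paper's, which simply cites Theorem~\ref{compactrepresentations2}, Theorem~\ref{MonsterModel}, and Remark~\ref{RemarkDecomposedMonsterModel} to conclude that $\tilde H_v^\perp$ and $\tilde H_w^\perp$ are isometrically isomorphic; one minor slip is calling $H_{v_e}$ and $H_{w_e}$ ``separable'' (this need not hold if $\A$ is not separable), but your actual argument---embedding a cyclic representation into a single copy of $H_{S_{\p(\A)_e}}$ and using $\k+\lambda=\k$ for $\lambda<\k$---is the right one and does not depend on separability.
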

\begin{proof}
Let us suppose that $tp(v/\emptyset)=tp(w/\emptyset)$. Then there is an automorphism $U$ of $(\tilde{H},\tilde{\p})$ such that $Uv=w$. Therefore the representations $(H_v,\p_v,v)$ and $(H_w,\p_w,w)$ are unitarily equivalent and therefore $(H_v,\p_v,v)$ is isometrically isomorphic to $(H_w,\p_w,w)$.

Conversely, let $(H_v,\p_v,v)$ be isometrically isomorphic to $(H_w,\p_w,w)$. By Theorem \ref{MonsterModel}, $(H_v,\p_v)$ and $(H_w,\p_w)$ can be seen as subrepresentations of $(\tilde{H},\tilde{\p})$. Given that $(H_v,\p_v,v)$ and $(H_w,\p_w,w)$ are isometrically isomorphic, by Theorem \ref{compactrepresentations2} and Theorem \ref{MonsterModel}, the decompositions of $(H_v,\p_v)$ and $(H_w,\p_w)$ into cyclic representations are isometrically isomorphic too, and therefore $\tilde{H}_v^\perp$ and $\tilde{H}_w^\perp$ are isometrically isomorphic. Then we get an automorphism of $(\tilde{H},\tilde{\p})$ that sends $v$ to $w$, and $v$ and $w$ have the same type over the empty set.
\end{proof}

\begin{theo}\label{typeoverempty4}
Let $v,w\in H$. Then $tp(v/\emptyset)=tp(w/\emptyset)$ if and only if $\f_v=\f_w$, where $\f_v$ denotes the positive linear functional on $\A$ defined by $v$ as in Lemma \ref{vectorsdefinepositivelinearfunctionals}.
\end{theo}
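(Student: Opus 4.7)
The plan is to reduce the statement to two earlier results that together do essentially all the work. By Theorem \ref{typeoverempty1}, $tp(v/\emptyset)=tp(w/\emptyset)$ is equivalent to the cyclic representations $(H_v,\p_v,v)$ and $(H_w,\p_w,w)$ being isometrically isomorphic. So the whole task is to match this cyclic-isomorphism condition with the equality $\f_v=\f_w$ of positive linear functionals on $\A$.

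For this matching, I would invoke Theorem \ref{UnitarilyEquivalentRepresentations}, which states that two cyclic representations $(H_1,\p_1,v_1)$ and $(H_2,\p_2,v_2)$ are isometrically isomorphic if and only if $\langle \p_1(a)v_1\,|\,v_1\rangle=\langle \p_2(a)v_2\,|\,v_2\rangle$ for every $a\in\A$. Applying this to our situation, the right-hand side says precisely that $\f_v(a)=\langle \p_v(a)v\,|\,v\rangle$ equals $\f_w(a)=\langle \p_w(a)w\,|\,w\rangle$ for all $a\in\A$, which is the definition of $\f_v=\f_w$ from Lemma \ref{vectorsdefinepositivelinearfunctionals}. (Note that $\langle \p_v(a)v\,|\,v\rangle$ computed inside $H_v$ agrees with $\langle \p(a)v\,|\,v\rangle$ computed inside $H$, since $H_v$ is a closed $\A$-invariant subspace containing $v$.)

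The proof itself then compresses to: chain the biconditional of Theorem \ref{typeoverempty1} with the biconditional of Theorem \ref{UnitarilyEquivalentRepresentations}, and observe that the resulting middle condition unfolds to $\f_v=\f_w$. There is no real obstacle to overcome; the structural work was already done in Theorem \ref{typeoverempty1}, where homogeneity of the monster model $(\tilde H,\tilde \p)$ (Theorem \ref{MonsterModel}) was used to upgrade an isometric isomorphism between cyclic subrepresentations to a global automorphism, and in Theorem \ref{UnitarilyEquivalentRepresentations}, which encodes the Gelfand--Naimark--Segal uniqueness statement. The present theorem is essentially a translation of that result into the language of types.
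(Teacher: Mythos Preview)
Your proposal is correct and uses the same two ingredients as the paper's proof, Theorem \ref{typeoverempty1} and Theorem \ref{UnitarilyEquivalentRepresentations}. The only cosmetic difference is that the paper handles the forward direction more directly---equal types give equal quantifier-free types, hence $\langle \p(a)v\,|\,v\rangle=\langle \p(a)w\,|\,w\rangle$ for all $a$---rather than routing that implication back through the two biconditionals; your chained version is equally valid.
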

\bpf
Let $v$ and $w\in H$ be such that tp$(v/\emptyset)=$tp$(w/\emptyset)$. Then qftp$(v/\emptyset)=$qftp$(w/\emptyset)$ and therefore, for every $a\in \A$, $\langle \p(a)v|v\rangle=\langle \p(a)w|w\rangle$. But this means that $\f_v=\f_w$.

Conversely, if $\f_v=\f_w$, by Theorem \ref{UnitarilyEquivalentRepresentations}, $(H_v,\p_v,v)$ is isometrically isomorphic to $(H_w,\p_w,w)$ and by Theorem \ref{typeoverempty1} tp$(v/\emptyset)=$tp$(w/\emptyset)$.
\epf

\begin{lemma}\label{Aut(H/E)}
Let $E\subseteq H$, $U\in \Aut(H,\p)$. Then $U\in \Aut((H,\p)/E)$ if and only if $U\upharpoonright (H_E,\p_E)=Id_{(H_E,\p_E)}$.
\end{lemma}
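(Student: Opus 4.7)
The plan is to prove both directions directly, using the characterization of automorphisms of $(H,\p)$ given in Remark \ref{autounitary}, namely that $U \in \Aut(H,\p)$ is a unitary operator commuting with every $\p(a)$ for $a \in Ball_1(\A)$.

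For the backward direction, the argument is immediate: if $U$ restricted to $(H_E,\p_E)$ is the identity, then in particular $Uv = v$ for every $v \in E$ (since $E \subseteq H_E$), and hence $U \in \Aut((H,\p)/E)$.

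For the forward direction, suppose $U \in \Aut((H,\p)/E)$, so $Uv = v$ for every $v \in E$. Recall from Definition \ref{DefinitionHv} that $H_E$ is the Hilbert subspace of $H$ generated by vectors of the form $\p(a)v$ with $v \in E$ and $a \in \A$. For any such generator, since $U$ commutes with $\p(a)$ by Remark \ref{autounitary},
\begin{equation*}
U(\p(a)v) \;=\; \p(a)(Uv) \;=\; \p(a)v.
\end{equation*}
Thus $U$ fixes every generator of $H_E$. By linearity, $U$ fixes every finite linear combination of such generators, and by continuity of $U$ (as a bounded operator), $U$ fixes the closure, which is all of $H_E$. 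Therefore $U\upharpoonright (H_E,\p_E) = Id_{(H_E,\p_E)}$.

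There is no real obstacle here; the proof is essentially a one-line check that reduces to the commutation relation between $U$ and the operators $\p(a)$, together with linearity and continuity to pass from generators to the whole closed subspace $H_E$.
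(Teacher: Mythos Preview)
Your proof is correct and follows essentially the same approach as the paper: both directions use Remark \ref{autounitary} to invoke the commutation $U\p(a)=\p(a)U$, showing $U$ fixes the generators $\p(a)v$ of $H_E$ and hence all of $H_E$. You are simply a bit more explicit than the paper in spelling out the passage from generators to $H_E$ via linearity and continuity, and in noting that $E\subseteq H_E$ (which uses $\p(e)=I$).
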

\bpf
Suppose that $U\upharpoonright (H_E,\p_E)=Id_{(H_E,\p_E)}$. Then, $U$ fixes $H_E$ pointwise, and, therefore, fixes $E$ pointwise.
Conversely, suppose $U\in \Aut((H,\p)/E)$. By Remark \ref{autounitary}, $U$ is an unitary operator that commutes with every $S\in\p(\A)$. Then for every $S\in\p(\A)$ and $v\in E$, we have that $U(Sv)=S(Uv)=Sv$. So $U$ acts on $H_E$ like the identity and the conclusion follows.
\epf

\begin{theo}\label{typeoverA}
Let $v$ and $w\in\tilde{H}$ and $E\subseteq\tilde{H}$. Then tp$(v/E)=$tp$(w/E)$ if and only if $P_E(v)=P_E(w)$ and tp$(P_E^\perp(v)/\emptyset)=$tp$(P_E^\perp(w)/\emptyset)$.
\end{theo}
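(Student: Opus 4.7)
The plan is to treat each direction separately, using the characterization of automorphisms of $(\tilde{H},\tilde{\p})$ as $\p$-equivariant unitaries (Remark \ref{autounitary}) and the description of automorphisms fixing $E$ pointwise (Lemma \ref{Aut(H/E)}).

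For the forward direction, I would pick an automorphism $U\in\Aut((\tilde{H},\tilde{\p})/E)$ with $Uv=w$. Lemma \ref{Aut(H/E)} gives $U|_{H_E}=\text{Id}_{H_E}$, and Remark \ref{autounitary} says $U$ is unitary and commutes with $\tilde{\p}(\A)$, so $U$ preserves the orthogonal $\A$-invariant decomposition $\tilde{H}=H_E\oplus H_E^\perp$. Applying $U$ to $v=P_E(v)+P_E^\perp(v)$ yields $w=P_E(v)+U(P_E^\perp(v))$, and comparing with $w=P_E(w)+P_E^\perp(w)$ via uniqueness of the decomposition gives $P_E(v)=P_E(w)$ together with $U(P_E^\perp(v))=P_E^\perp(w)$; the latter witnesses $\text{tp}(P_E^\perp(v)/\emptyset)=\text{tp}(P_E^\perp(w)/\emptyset)$.

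For the backward direction, Theorem \ref{typeoverempty4} converts $\text{tp}(P_E^\perp(v)/\emptyset)=\text{tp}(P_E^\perp(w)/\emptyset)$ into $\f_{P_E^\perp(v)}=\f_{P_E^\perp(w)}$, and Theorem \ref{UnitarilyEquivalentRepresentations} then produces an $\A$-equivariant isometric isomorphism of cyclic representations
\[
U_0\colon (H_{P_E^\perp(v)},\p_{P_E^\perp(v)},P_E^\perp(v))\longrightarrow (H_{P_E^\perp(w)},\p_{P_E^\perp(w)},P_E^\perp(w)).
\]
Because $\p(\A)$ is $^*$-closed, $H_E^\perp$ is $\p(\A)$-invariant, so both $H_{P_E^\perp(v)}$ and $H_{P_E^\perp(w)}$ lie inside $H_E^\perp$. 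Gluing $U_0$ to $\text{Id}_{H_E}$ then yields an $\A$-equivariant unitary
\[
U_1\colon H_E\oplus H_{P_E^\perp(v)}\longrightarrow H_E\oplus H_{P_E^\perp(w)},
\]
which satisfies $U_1(v)=P_E(v)+P_E^\perp(w)=P_E(w)+P_E^\perp(w)=w$ since $P_E(v)=P_E(w)$.

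The main obstacle, which I expect to be essentially the only nontrivial step, is extending $U_1$ to a full automorphism of $(\tilde{H},\tilde{\p})$. I plan to reproduce the $\k$-homogeneity argument from the proof of Theorem \ref{MonsterModel}: decompose the orthogonal complements of $H_E\oplus H_{P_E^\perp(v)}$ and $H_E\oplus H_{P_E^\perp(w)}$ in $\tilde{H}$ according to Remark \ref{RemarkDecomposedMonsterModel}, first matching the discrete parts (which lie in $acl(\emptyset)$ by Lemma \ref{Hdsubsetalgebraicclosureofemptyset} and can be matched via Lemma \ref{LemmaHensonTellez}), and then matching the remaining essential parts, each of which is a direct sum of $\k$ copies of $H_{S_{\p(\A)_e}}$ after removing a subrepresentation of density less than $\k$, hence they are $\A$-equivariantly isomorphic. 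The resulting automorphism $U$ fixes $H_E$ pointwise, so by Lemma \ref{Aut(H/E)} it fixes $E$ pointwise, and satisfies $Uv=w$, giving $\text{tp}(v/E)=\text{tp}(w/E)$.
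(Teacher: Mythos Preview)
Your forward direction is exactly the paper's argument.

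For the backward direction your approach is correct but takes a longer route than the paper. The paper simply invokes the homogeneity of the monster model: from $\mathrm{tp}(P_E^\perp(v)/\emptyset)=\mathrm{tp}(P_E^\perp(w)/\emptyset)$ it picks a full automorphism $U$ of $(\tilde{H},\tilde{\p})$ with $U(P_E^\perp(v))=P_E^\perp(w)$ and then sets $\tilde{U}=\mathrm{Id}_{H_E}\oplus(U\upharpoonright H_E^\perp)$, which fixes $E$ pointwise by Lemma~\ref{Aut(H/E)} and sends $v$ to $w$. You instead go through Theorem~\ref{typeoverempty4} and Theorem~\ref{UnitarilyEquivalentRepresentations} to build the cyclic isomorphism $U_0$ explicitly, and then rerun the $\k$-homogeneity extension from Theorem~\ref{MonsterModel} by hand. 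The paper's version is quicker because the extension step is already packaged in the monster model; on the other hand, its shortcut tacitly assumes the global $U$ restricts to a unitary of $H_E^\perp$ onto itself (which an arbitrary such $U$ need not do), whereas your construction sidesteps this by building the partial map inside $H_E^\perp$ from the outset before extending.
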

\begin{proof}
\bdc
\item[$\Rightarrow$] Suppose tp$(v/E)=$tp$(w/E)$. Given that tp$(v/E)=$tp$(w/E)$, there exists $U\in\Aut((\tilde{H},\tilde{\p})/E)$ such that $Uv=w$. By Lemma \ref{Aut(H/E)},  $U\upharpoonright (H_E,\p_E)=Id_{(H_E,\p_E)}$ and $U(P_E(v))=P_E(Uv)=P_E(w)$. On the other hand, $U(P_E^\perp(v))=P_E^\perp(w)$ and therefore tp$(P_E^\perp(v)/\emptyset)=$tp$(P_E^\perp(w)/\emptyset)$.
\item[$\Leftarrow$] Asume $P_E(v)=P_E(w)$ and tp$(P_E^\perp(v)/\emptyset)=$tp$(P_E^\perp(w)/\emptyset)$. Then there exists an automoprhism $U$ of $(\tilde{H},\tilde{\p})$ such that $U(P_E^\perp(v))=P_E^\perp(w)$. Let $\tilde{U}=Id_{H_E}\oplus (U\upharpoonright\tilde{H}_E^\perp)$. Then, by Lemma \ref{Aut(H/E)}, $\tilde{U}$ is an automorphism of $(\tilde{H},\tilde{\p})$ that fixes $E$ pointwise and  $Uv=w$. This implies that tp$(v/E)=$tp$(w/E)$.
\edc
\end{proof}

\begin{coro}\label{QuantifierElimination}
The structure $(H,\p)$ has quantifier elimination.
\end{coro}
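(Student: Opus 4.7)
The plan is to verify the standard criterion for quantifier elimination in continuous logic: whenever two tuples in the monster model $(\tilde H,\tilde\p)$ have the same quantifier-free type over a small set $E$, they have the same complete type. The key preliminary observation is that the inner product $\langle x\,|\,y\rangle$, and hence every expression $\langle \dot a\, x\,|\,y\rangle$ with $a\in Ball_1(\A)$, is a quantifier-free term in $x$ and $y$, by the polarization identity recalled in the introduction.

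I first treat the case of a single vector $v\in\tilde H$. Suppose $qftp(v/E)=qftp(w/E)$. The qftp determines $\langle \dot a\,v\,|\,e\rangle$ for every $a\in Ball_1(\A)$ and $e\in E$, which pins down the inner product of $v$ with every element of $H_E$; therefore $P_E(v)=P_E(w)$. Likewise
\[
\f_{P_E^\perp(v)}(a)\;=\;\langle \dot a\,(v-P_E(v))\,|\,v-P_E(v)\rangle
\]
is computable from the same qftp data, so $\f_{P_E^\perp(v)}=\f_{P_E^\perp(w)}$. By Theorem \ref{typeoverempty4} this gives $tp(P_E^\perp(v)/\emptyset)=tp(P_E^\perp(w)/\emptyset)$, and Theorem \ref{typeoverA} then delivers $tp(v/E)=tp(w/E)$.

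For a tuple $\bar v=(v_1,\ldots,v_n)$, I would proceed by induction on $n$, using the homogeneity of the monster model from Theorem \ref{MonsterModel}. Given $qftp(\bar v/E)=qftp(\bar w/E)$, the single-vector case yields an automorphism $U_1\in\Aut((\tilde H,\tilde\p)/E)$ with $U_1 v_1=w_1$. The hypothesis transfers to $qftp(v_2,\ldots,v_n/E\cup\{v_1\})=qftp(U_1^{-1}w_2,\ldots,U_1^{-1}w_n/E\cup\{v_1\})$, since the matched inner products $\langle \dot a\,v_i\,|\,\dot b\,v_1\rangle$ and $\langle \dot a\,v_i\,|\,\dot b\,e\rangle$ are all supplied by the original qftp. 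Iterating produces an automorphism fixing $E$ pointwise and sending $\bar v$ to $\bar w$, as required.

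The main point requiring care is verifying that the quantifier-free type over $E$ really encodes all the inner-product data used above. For this I would invoke the multiplication axioms of $T_\p$, which reduce iterated actions $\dot a_1\dot a_2\cdots\dot a_k$ to a single $\dot{a_1 a_2\cdots a_k}$, together with the involution axiom $\langle \dot a\,x\,|\,y\rangle=\langle x\,|\,\dot{a^*}y\rangle$; these two facts reduce any expression of the form $\langle \dot a_1\cdots\dot a_k v_i\,|\,\dot b_1\cdots\dot b_\ell v_j\rangle$ to the atomic forms already used, so no quantifier is needed to compute it. Once this bookkeeping is in place the argument above is immediate.
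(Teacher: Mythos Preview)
Your proposal is correct and follows the same route as the paper: the paper's one-line proof simply invokes Theorem \ref{typeoverA} to say that types are determined by the quantifier-free data $P_E(v)$ and $tp(P_E^\perp(v)/\emptyset)$, and you are spelling out exactly why the quantifier-free type over $E$ already computes those quantities via inner products. Your treatment of $n$-tuples by the standard induction using homogeneity is a genuine addition, since the paper's Theorem \ref{typeoverA} is stated only for single vectors and the corollary's proof does not address tuples at all; your argument is the natural way to close that gap.
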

\bpf
This follows from Theorem \ref{typeoverA} that shows that types are determined by quantifier-free conditions contained in it.
\epf

\brm
Note that the quantifier elimination result that we proved is uniform. That is, types are isolated by the conditions $\langle\dot{a}x\ |\ x\rangle$ for $a\in\A$ no matter the particular non-degenerate representation $(H,\p)$ we choose.
\erm

Recall that the weak$^*$ topology in $\A^\prime$ (the Banach dual Algebra of $\A$) is the coarsest topology in $\A^\prime$ such that for every $a\in A$, the function $F_a:\A^\prime\to\C$ is continuous, where $F_a(\f)=\f(a)$ for $a\in\A$ and $\f\in\A^\prime$.

\bth
The stone space $S_1(Th(H,\p))$ (i.e. the set of types of vectors of norm less than or equal to $1$) with the logic topology is homeomorphic to the quasi state space $Q_\A$ with the weak$^*$ topology.
\eth
\bpf
We consider types of vectors with norm less than or equal to $1$. Similarly, we consider positive linear functionals with norm less than or equal to $1$, that is, the quasi state space $Q_\A$. By Theorem \ref{typeoverempty4}, types of vectors in $H$ are determined by the corresponding positive linear functionals, explicitly $v\to \f_v(a):=\langle \p(a)v\ |\ v\rangle$. So, there is a bijection between $S_1(Th(H,\p))$ and $Q_\A$. \
To prove bicontinuity, let $h:S_1(Th(H,\p))\to Q_\A$ be the previously defined bijection. Let $X$ be a weak$^*$ basic open set in $Q_\A$; then there exists an open sets $V_1,\dots,V_k\subseteq \C$ and elements $a_1,dots,a_k\in\A$ such that for every $\f\in Q_\A$, we have that $\f\in X$ if and only if $\f(a_i)\in V_i$. For $\f\in X$ let $v_\f$ be a cyclic vector such that $\f=\f_{v_\f}$. Then for every $\f\in X$ and $i=1,\dots,k$, $\langle \p(a_i)v_\f\ |\ v_\f\rangle\in V_i$ but this condition defines an open set in $S_1(Th(H,\p))$.\\ Conversely, by quantifier elimination, every basic open sets $X$ in the logic topology in $S_1(Th(H,\p))$ can be expresed as finite intersection of sets with the form:
\[\{p\in S_1(Th(H,\p))\ |\ v_\f\models p\Rightarrow\langle \p(a)v_\f\ |\ v_\f\rangle\in V\}\]
where $V\subseteq \C$ open. Each of this sets is in correspondence by $h$ with a set of the form
\[\{\f\in Q_\A\ |\ \langle \p(a)v_\f\ |\ v_\f\rangle\in V\}\]
which defines an open set in $Q_\A$.
\epf

\section{a model companion for $IHS_\A$}\label{SectionModelCompanionForTA}
In this section we prove that the theory $IHS_\A$ has a model companion (Theorem \ref{TAmodelCompanion}). This result goes in the same direction as results from Ben Yaccov and Usvyatsov (\cite{BU}) and Berenstein and Henson (\cite{BerHen}).
\bdf
Let $EIHS_\A$ be the theory of a representation $(H,\p)$ such that no element of $\A$ acts as a compact operator.
\edf

\bth\label{TheoremModelosTAentreModelosTA*}
For every Hilbert space representation $(H,\p)\models IHS_\A$, there is a Hilbert space representation $(H^\prime,\p^\prime)\models EIHS_\A$, such that $(H,\p)\subseteq (H^\prime,\p^\prime)$.
\eth
\bpf
Let $H=H_d\oplus H_e$ as in Definition \ref{discreteparts}. We define:\[H^\prime:=\bigl(\bigoplus_\omega H_d\bigr)\oplus H_e\]
and for every $a\in\A$ such that $\p(a)$ is compact in $H$,
\[\p^\prime(a):\bigl(\bigoplus_\omega \p(a)\bigr)\oplus 0\]
and
\[\p^\prime(a):=0\oplus\p(a)\]
for every $a\in\A$ such that $\p(a)$ is non-compact in $H$.

Then, $(H^\prime,\p^\prime)$ is clearly a representation of $\A$. Let $a\in\A$ such that $\p(a)$ is compact in $H$. Since the non-zero eigenvalues of $\p^\prime(a)$ have infinite dimensional eigen spaces, the operator $\p^\prime(a)$ cannot be compact in $(H^\prime,\p^\prime)$. Therefore all the elements of $\A$ that acted compactly on $H$ no longer act compactly on $H^\prime$. Since the elements of $\A$ that acted non-compactly on $H$ still act non-compactly on $H^\prime$, no element of $\A$ act compactly on $H^\prime$ and therefore, $(H^\prime,\p^\prime)\models EIHS_\A$.
\epf

\bco\label{TAmodelCompanion}
$EIHS_\A$ is a model companion for $IHS_\A$.
\eco
\bpf
Clearly, every model of $EIHS_\A$ is a model of $IHS_\A$. On the other hand, by Theorem \ref{TheoremModelosTAentreModelosTA*}, every model of $IHS_\A$ can be (non elementarily) embedded in a model of $EIHS_\A$. These previous fact show that $EIHS_\A$ is a companion for $IHS_\A$. Since by Corollary \ref{QuantifierElimination} the theory $EIHS_\A$ is model complete, the theory $EIHS_\A$ is a model companion for $IHS_\A$.
\epf

\addcontentsline{toc}{chapter}{Bibliography}

\end{document}